    \thanks{Current maintainer of class file is
      \href{https://vtex.lt}{VTeX, Lithuania}. Please send all queries to
      \href{mailto:latex-support@vtex.lt}{\texttt{latex-support@vtex.lt}}.}} % \thanks is optional. Insert line breaks with \\
\begin{document}

\section{Introduction}

Interacting particle systems that combine random motion with local activation rules have provided deep insights into the probabilistic mechanisms underlying epidemic propagation, rumor dynamics, and branching structures in random environments. Among these, the \emph{frog model}, introduced in the late 1990s and subsequently developed by Alves, Machado and Popov \cite{Alves2002}, has become one of the most studied frameworks. The dynamics are straightforward: particles sit on the vertices of a connected graph; those at the root begin active while all others are inactive. Active particles perform independent simple symmetric random walks, and any inactive particle encountered is immediately activated, itself beginning a random walk and perpetuating the mechanism. The central question is whether the system \emph{survives}, meaning that active particles persist indefinitely, or whether it eventually \emph{dies out}. This binary outcome masks a remarkable richness of behavior, shaped by the geometry of the graph, the distribution of initial particles, and the stochastic laws governing activation and death.

The model exhibits contrasting behaviors across different settings. On trees and higher-dimensional lattices, it has been shown that the frog model undergoes genuine phase transitions, with survival depending on critical parameters that can often be computed or bounded \cite{Fontes2004,Lebensztayn2005,Gallo2023}. On $\mathbb{Z}^d$ with $d \geq 2$, phase transitions occur depending on the number of initial particles per site \cite{Alves2002}. On homogeneous trees, upper and lower bounds for the critical probability have been refined in a sequence of works \cite{Lebensztayn2005,Lebensztayn2019,Lebensztayn2020}. These results established the frog model as a fertile ground for investigating survival phenomena analogous to those of percolation and branching processes. In sharp contrast, on $\mathbb{Z}$ with geometric lifetimes governed by a \emph{fixed} parameter $p$, extinction occurs almost surely under mild assumptions, and no non-trivial survival is possible. This rigidity has motivated the study of variations designed to recover survival, including models with drifted random walks or spatially varying survival parameters \cite{Bertacchi2014,Gantert2009}.

A breakthrough in this direction was recently provided by Carvalho and Machado \cite{CarvalhoMachado2025}, who considered a version of the frog model on $\mathbb{Z}$ with a \emph{random survival parameter}. In their formulation, each particle has a survival probability $\pi$ drawn independently from a distribution on $(0,1)$, and its lifetime is geometric with parameter determined by this realization. Focusing on the Beta family $\pi\sim \mathrm{Beta}(\alpha,\beta)$, they proved a sharp threshold at $\beta=\tfrac12$: extinction for $\beta>1/2$ and survival with positive probability for $\beta<1/2$, with the behavior at the boundary case $\beta=1/2$ depending on $\alpha$ and on the initial configuration.

The present work extends this phenomenon to a broad and natural class of survival distributions whose densities satisfy, near the right endpoint,

\begin{equation} \label{eq:edge-behavior-v1}
f_\pi(u)\ \sim\ (1-u)^{\beta-1}\,L\!\Big(\frac{1}{1-u}\Big)\qquad (u\uparrow 1),
\end{equation}

where $\beta>0$ and $L$ is slowly varying at $\infty$. This class contains the Beta family as a special case ($L\equiv\text{constant}$) and also accommodates a wide range of heterogeneous right tails. We show that the extinction–survival dichotomy depends only on the exponent $\beta$: if $\beta>\tfrac12$ the process dies out, while if $\beta<\tfrac12$ it survives with positive probability. In the critical regime $\beta=\tfrac12$, the outcome hinges on the quantities $\liminf_{n\to\infty}L(n^{2})$ and $\limsup_{n\to\infty}L(n^{2})$, and on the initial configuration.

%\subsection{Organization of the paper}

The remainder of the paper is organized as follows. Section~\ref{sec:def_results} introduce some definitions for the model and states the main results, whose proofs are presented in Section~\ref{Proof}. Section~\ref{Examples} is devoted to examples: Section~\ref{subs:ex1} presents concrete families of survival–parameter laws satisfying condition~\eqref{eq:edge-behavior-v1}, together with their parameter \(\beta\) and slowly varying factor \(L\); Section~\ref{subs:ex2} provides an example of a case that falls in the inconclusive regime of the main theorem, where we can still prove results regarding survival; Section~\ref{subs:ex3} gives an extensive list of distributions for odds satisfying \eqref{limw}.

\section{Definitions and main results}\label{sec:def_results}

This section introduces the formal setup of the frog model on $\mathbb{Z}$ with particle-wise random survival parameters. We specify the initial configuration, the independent random walks, and the geometric lifetimes determined by a common random variable $\pi \in (0,1)$. The analysis focuses on survival-parameter laws whose densities satisfy condition~(\ref{eq:edge-behavior-v1}),
where $\beta > 0$ and $L$ is slowly varying at infinity. This class strictly extends the Beta family and captures a broad range of regularly varying right tails.

We then introduce the notions of survival and extinction of the system. The main result establishes a sharp phase transition governed solely by the exponent~$\beta$. When $\beta > 1/2$, the process becomes extinct almost surely under mild moment assumptions on the initial particle counts, whereas for $\beta < 1/2$ the system survives with positive probability whenever the initial configuration is not identically zero. The critical case $\beta = 1/2$ requires a refined analysis involving the limits of $L(n^{2})$ and the expectation of the initial distribution. These definitions and asymptotic conditions form the basis for the proofs presented in Section~3.

\subsection{Model and notation}

Let $\mathbb{N}=\{1,2,3,\dots\}$ and $\mathbb{N}_0=\mathbb{N}\cup\{0\}$. We say that a function $L:(0,+\infty)\to (0,+\infty)$ is \textit{regularly varying} with index $\rho$ at infinity if
\[\lim_{x\to\infty}\frac{L(\lambda x)}{L(x)}=\lambda^\rho, \qquad \text{for every }\lambda>0;\]
moreover, $L$ is called \textit{slowly varying} at infinity if $\rho=0$. We say that $f(x)\sim g(x)$ as $x\to \infty$ when $\lim_{x\to \infty}f(x)/g(x)=1$; the notation is used analogously for other limits, such as $x\downarrow 0$ or $x\uparrow 1$.

We now define the frog model on \(\mathbb{Z}\) in a formal way. For each $x\in\mathbb{Z}$, let $\eta_x\in\mathbb{N}_0$ be the initial number of particles at $x$. Particles are indexed by $(x,i)$ with $1\le i\le \eta_x$.
The families
\[
\{\eta_x\}_{x\in\mathbb{Z}},\quad \{\pi_{x,i}\}_{x\in\mathbb{Z},\,i\in\mathbb{N}},\quad
\{(S^{x,i}_n)_{n\in\mathbb{N}_0}\}_{x\in\mathbb{Z},\,i\in\mathbb{N}}
\]
are mutually independent, where $\{\eta_x\}_{x\in\mathbb{Z}}$ and $\{\pi_{x,i}\}_{x\in\mathbb{Z},\,i\in\mathbb{N}}$ are i.i.d.\ sequences, and $\{(S^{x,i}_n)_{n\in\mathbb{N}_0}\}_{x,i}$ is a family of independent simple symmetric random walks on $\mathbb{Z}$ with $S^{x,i}_0=x$; the variables $\pi_{x,i}\in(0,1)$ have common law $\pi$ (with density $f_\pi$ when $\pi$ is absolutely continuous).

Given $\pi_{x,i}=p$, the lifetime $L_{x,i}\in\mathbb{N}_0$ is geometric with tail
\[
\mathbb{P}(L_{x,i}\ge k\mid \pi_{x,i}=p)=p^{k},\qquad k\in\mathbb{N}_0.
\]
Conditional on $\{\pi_{x,i}\}$, the lifetimes $\{L_{x,i}\}$ are independent of $\{\eta_x\}_{x\in\mathbb{Z}}$ and of the walks
$\{(S^{x,i}_n)_{n\in\mathbb{N}_0}\}_{x\in\mathbb{Z},\,i\in\mathbb{N}}$ (and of each other).

At time $0$ all particles at the origin are \emph{active} and all others are \emph{inactive}. Each active particle evolves as follows: before each step it dies with probability $1-\pi_{x,i}$; if it survives, it takes one step of its simple symmetric random walk. Whenever an active particle visits a site, all particles
present there become active and evolve independently by the same rules. We denote this system by $\mathrm{FM}(\mathbb{Z},\pi,\eta)$, where $\pi$ is the common law of the survival parameters and $\eta$ is the law of the initial number of particles per vertex.

\subsection{Survival and extinction}

This section formalizes the notions of survival and extinction for the frog model on $\mathbb{Z}$. We work under the framework introduced in Section~2.1, where particle lifetimes are determined by a survival parameter with density exhibiting the asymptotic form $(1-u)^{\beta-1} L((1-u)^{-1})$ as $u \uparrow 1$.

The main theorem presented here establishes a sharp dichotomy governed by the exponent $\beta$. When $\mathbb{E}[\eta] < \infty$ and $\beta > 1/2$, extinction occurs almost surely. In contrast, if $\beta < 1/2$ and the initial configuration is not almost surely empty, the process survives with positive probability. The critical case $\beta = 1/2$ requires additional conditions on the slowly varying function $L(n^{2})$ and the expected number of initial particles. 

\begin{definition}
A realization of $\mathrm{FM}(\mathbb{Z},\pi,\eta)$ \emph{survives} if at all times there exists at least one active particle; otherwise it \emph{dies out}.
\end{definition}

\begin{theorem}\label{teomain}
Assume that the survival-parameter law $\pi$ has density satisfying
condition
\begin{equation} \label{eq:edge-behavior}
f_\pi(u)\ \sim\ (1-u)^{\beta-1}\,L\!\Big(\frac{1}{1-u}\Big)\qquad (u\uparrow 1),
\end{equation}
with $\beta>0$ and $L$ slowly varying at $\infty$. Then:

\smallskip
\noindent (i) If $E(\eta)<\infty$ and either
\begin{itemize}
\item $\beta>\tfrac12$, \quad or
\item $\beta=\tfrac12$ \quad and \quad $\displaystyle 8\;\limsup_{n\to\infty} L(n^{2})\ <\ \frac{1}{E(\eta)}$,
\end{itemize}
then
\[
\mathbb{P}\big[\mathrm{FM}(\mathbb{Z},\pi,\eta)\text{ survives}\big]=0.
\]

\smallskip
\noindent (ii) If $\mathbb{P}(\eta=0)<1$ and either
\begin{itemize}
\item $\beta<\tfrac12$,\quad or
\item $\beta=\tfrac12$ \quad and \quad $\displaystyle \sqrt{2}\;\liminf_{n\to\infty} L(n^{2})\ >\ \frac{1}{E(\eta)}$ \ \textup{(with $1/E(\eta):=0$ when $E(\eta)=\infty$)},
\end{itemize}
then
\[
\mathbb{P}\big[\mathrm{FM}(\mathbb{Z},\pi,\eta)\text{ survives}\big]>0
\]

\end{theorem}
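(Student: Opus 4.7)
My plan is to derive one sharp asymptotic for the displacement of a single particle and then feed it into two parallel comparisons: a subcritical branching random walk (BRW) for the extinction half and a long-range percolation/block argument for the survival half. Let $R=\max_{0\le k\le L}|S_k|$ and $R^{+}=\max_{0\le k\le L}S_k$ be the total and one-sided ranges of a single frog starting at $0$. Conditioning on $\pi=p$ and using the classical generating function of the SRW first-hitting time $T_n$,
\[
E\!\left[p^{T_n}\right]=\Big(\tfrac{1-\sqrt{1-p^{2}}}{p}\Big)^{n},
\]
I would obtain $\mathbb{P}(R^{+}\ge n\mid \pi=p)=(1-\sqrt{2(1-p)})^{n}(1+o(1))$ as $p\uparrow 1$, while $\mathbb{P}(R\ge n\mid \pi=p)$ is twice this by the reflection principle (the hits at $+n$ and $-n$ being asymptotically disjoint). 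Integrating against $f_\pi$ with the substitution $v=n\sqrt{2(1-p)}$ and applying Karamata--Potter bounds to pass the slowly varying factor through the integral then yields
\[
\mathbb{P}(R^{+}\ge n)\sim 2^{1-\beta}\Gamma(2\beta)\,n^{-2\beta}L(n^{2}),\qquad \mathbb{P}(R\ge n)\sim 2^{2-\beta}\Gamma(2\beta)\,n^{-2\beta}L(n^{2}),
\]
so that at $\beta=1/2$ the tails are $\sqrt{2}\,n^{-1}L(n^{2})$ and $2\sqrt{2}\,n^{-1}L(n^{2})$, directly producing the constants in the hypotheses.

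For the extinction part (i), I would stochastically dominate $\mathrm{FM}(\mathbb{Z},\pi,\eta)$ from above by a BRW in which each active particle, at every site it visits, seeds an independent copy of $\eta$ new particles that evolve independently. The expected total number of sites ever activated, starting from one origin site, is then bounded by a series in $\mathbb{P}(R\ge n)$, and a first-moment/Borel--Cantelli argument reduces survival of the frog model to supercriticality of this BRW. When $\beta>1/2$ the series $\sum_n \mathbb{P}(R\ge n)$ converges and subcriticality is automatic. In the boundary case $\beta=1/2$ I would partition $\mathbb{Z}_{+}$ into blocks and track the expected extent of the activated interval after each epoch; the explicit constant $8$ in the extinction criterion emerges by combining the two-sided prefactor $2\sqrt{2}$ with further factors of $2$ from doubling over the two sides of the origin and from the summation-versus-integration correction needed to pass from $\mathbb{P}(R\ge n)\sim 2\sqrt{2}\,L(n^{2})/n$ to a bound on the expected progress per epoch.

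For the survival part (ii), I would couple the frog model from below with an oriented long-range percolation on $\mathbb{Z}_{+}$ by declaring $x\to y$ an open bond (for $y>x$) whenever some particle starting at $x$ reaches site $y$. By independence of the $\eta_x$ particles and linearization $1-(1-q)^{\eta}\approx \eta q$ for small $q$, this bond has probability asymptotic to $E(\eta)\,\mathbb{P}(R^{+}\ge y-x)$. A Newman--Schulman style renormalization, exploiting the independence of the family $\{R^{+}_{x}\}_{x\in\mathbb{Z}}$ to compare the frontier process $Y_{n+1}=\max_{x\le Y_{n}}(x+R^{+}_{x})$ with a supercritical renewal/branching chain, then delivers a positive survival probability as soon as the bond model is supercritical; this is automatic for $\beta<1/2$, and at $\beta=1/2$ it reduces to the sharp threshold $\sqrt{2}\,E(\eta)\liminf L(n^{2})>1$ inherited from the one-sided prefactor $\sqrt{2}\,\Gamma(1)=\sqrt{2}$. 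The principal technical obstacle throughout will be the sharp bookkeeping at $\beta=1/2$: away from criticality the dichotomy follows qualitatively from summability of $n^{-2\beta}L(n^{2})$, but matching the explicit constants $8$ and $\sqrt{2}$ demands that every factor of the prefactor $2^{2-\beta}\Gamma(2\beta)$ be tracked, that $L$ be uniformly controlled inside the relevant integrals via Potter bounds, and that the block sizes in the BRW and percolation comparisons be tuned so no constant is lost.
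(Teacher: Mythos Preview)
Your single-particle tail computation matches the paper's core engine: both integrate $\big(\tfrac{1-\sqrt{1-p^2}}{p}\big)^n$ against $f_\pi$ and extract the $n^{-2\beta}L(n^2)$ scaling via a Laplace-type analysis with Potter bounds. Your claimed sharp asymptotic $\mathbb{P}(R^+\ge n)\sim 2^{1-\beta}\Gamma(2\beta)\,n^{-2\beta}L(n^2)$ is in fact \emph{stronger} than what the paper proves: the paper establishes the matching lower bound $\liminf n\mathbb{P}(D^{\rightarrow}\ge n)/L(n^2)\ge 2^{1-\beta}\Gamma(2\beta)=\sqrt{2}$ at $\beta=\tfrac12$, but on the upper side it deliberately slackens $\sqrt{1-x^2}$ to $\sqrt{1-x}$ in the integrand (losing a factor $\sqrt{2}$) and obtains only $\limsup n\mathbb{P}(D^{\rightarrow}\ge n)/L(n^2)\le 2$.

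The real divergence is in the comparison step. The paper does \emph{not} build a BRW or a Newman--Schulman renormalization; it simply invokes Proposition~1.2 of Carvalho--Machado~(2025), which supplies the ready-made criteria
\[
\limsup_{n\to\infty} n\,\mathbb{P}(D^*\ge n)<\frac{1}{2E(\eta)}\ \Rightarrow\ \text{extinction},
\qquad
\liminf_{n\to\infty} n\,\mathbb{P}(D^{\rightarrow}\ge n)>\frac{1}{E(\eta)}\ \Rightarrow\ \text{survival}.
\]
Plugging in the tail bounds (the slack upper bound plus the trivial $\mathbb{P}(D^*\ge n)\le 2\mathbb{P}(D^{\rightarrow}\ge n)$ gives $\limsup n\mathbb{P}(D^*\ge n)\le 4\limsup L(n^2)$; the sharp lower bound gives $\liminf n\mathbb{P}(D^{\rightarrow}\ge n)\ge\sqrt{2}\liminf L(n^2)$) produces the constants $8$ and $\sqrt{2}$ immediately. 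So the $8$ is an artifact of three stacked doublings in the paper's pipeline, not of any feature of your BRW; your post-hoc accounting (``$2\sqrt{2}$ with further factors of $2$'') does not correspond to what is actually happening.

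Your proposed extinction argument has a genuine gap. The BRW you describe---each particle seeding an independent copy of $\eta$ at every site it visits---has mean offspring $E(\eta)\cdot E[2R+1]$, which is \emph{infinite} for $\beta\le\tfrac12$ and can exceed $1$ even for $\beta>\tfrac12$; convergence of $\sum_n\mathbb{P}(R\ge n)$ does not make subcriticality ``automatic''. The ``block argument'' you fall back on at $\beta=\tfrac12$ is precisely the missing ingredient, and to match the stated threshold it must deliver a criterion of the form $\limsup n\mathbb{P}(D^*\ge n)<c/E(\eta)$, not a first-moment bound on total progeny. Had you the cited proposition in hand, your sharper one-particle asymptotic would actually \emph{improve} the paper's extinction constant from $8$ to $4\sqrt{2}$; as written, your outline does not close the boundary case.
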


\begin{remark} In particular, $\pi\sim\mathrm{Beta}(\alpha,\beta)$ satisfies condition~\eqref{eq:edge-behavior}, with $L\equiv 1/B(\alpha,\beta)$. When $\beta=\tfrac12$, item \textup{(i)} becomes $B(\alpha,\tfrac12)>8\,E(\eta)$ and item \textup{(ii)} becomes $B(\alpha,\tfrac12)<\sqrt{2}\,E(\eta)$. This improves the results in \cite{CarvalhoMachado2025}.
\end{remark}

\begin{remark}\label{rem1} The following consequences are immediate from the proof of Theorem~\ref{teomain}.
Let $U\in(0,1)$, $\beta>0$, and $L$ be a slowly varying function.
\begin{itemize}
\item If \(f_\pi(u)\ \le\ (1-u)^{\beta-1}\,L\!\Big(\tfrac{1}{1-u}\Big)\quad\text{for all }u\in[U,1),
\) then Theorem~\ref{teomain}\,(i) holds.
\item If \(f_\pi(u)\ \ge\ (1-u)^{\beta-1}\,L\!\Big(\tfrac{1}{1-u}\Big)\quad\text{for all }u\in[U,1),
\) then Theorem~\ref{teomain}\,(ii) holds.
\end{itemize}
\end{remark}

%\begin{corollary}\label{cor1}
%Suppose there exist \(U\in(0,1)\), \(D>0\), \(\beta>0\), and a function \(L\) that is slowly varying at \(+\infty\) such that:

%(i) \(f_\pi(u)\ \le\ D\,(1-u)^{\beta-1}\,L\!\left(\frac{1}{1-u}\right),\, (u\ge U)\). 

%If $E(\eta)<\infty$ and either
%\begin{itemize}
%\item $\beta>\tfrac12$, \quad or
%\item $\beta=\tfrac12$ and \quad $\displaystyle 4D\;\limsup_{n\to\infty} L(n^{2})\ <\ \frac{1}{E(\eta)}$,
%\end{itemize}
%then
%\[
%\mathbb{P}\big[\mathrm{FM}(\mathbb{Z},\pi,\eta)\text{ survives}\big]=0.
%\]

%(ii) \(f_\pi(u)\ \ge\ D\,(1-u)^{\beta-1}\,L\!\left(\frac{1}{1-u}\right),\, (u\ge U)\). 

%If $\mathbb{P}(\eta=0)<1$, then
%\[
%\mathbb{P}\big[\mathrm{FM}(\mathbb{Z},\pi,\eta)\text{ survives}\big]>0
%\]
%in either of the following cases:
%\begin{itemize}
%\item $\beta<\tfrac12$;
%\item $\beta=\tfrac12$ and \quad $\displaystyle \sqrt{2}\,D\;\liminf_{n\to\infty} L(n^{2})\ >\ \frac{1}{E(\eta)}$ \ \textup{(with $1/E(\eta):=0$ when $E(\eta)=\infty$)}.
%\end{itemize}

%\end{corollary}

\subsection{Heavy-tailed odds}\label{infinity_law}

Recall that \(\pi\) can be interpreted as the (random) probability that a particle survives at any given instant of time. In this section, we shift our focus from probabilities to odds by setting
\[X:=\frac{\pi}{1-\pi}\in(0,\infty),\]
and concentrate on cases where \(X\) is heavy-tailed. The inverse relation is then given by
\[\pi=\frac{X}{1+X}\in(0,1).\]

\begin{proposition}
\label{proposition:cmp}
Let X be a non-negative random variable with density $f_X(t)$ such that
\begin{equation}\label{limw}
f_X(t)\ \sim\,t^{-(\beta+1)}L_X(t)\qquad (t\rightarrow \infty)
\end{equation}
for some constants \(\beta>0\) and a function $L_X$ slowly varying at infinity.
Then, the random variable \(\pi:=\frac{X}{1+X}\) has a density and \[f_\pi(u)\ \sim(1-u)^{\beta-1}L_X \Big(\frac{1}{1-u}\Big) \qquad (u\uparrow 1).\]

\end{proposition}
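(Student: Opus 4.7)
The plan is to reduce the statement to a direct change of variables followed by an appeal to the Uniform Convergence Theorem for slowly varying functions. Since $\pi=X/(1+X)$ is a smooth increasing bijection from $(0,\infty)$ onto $(0,1)$ with inverse $X=\pi/(1-\pi)$ and Jacobian $du/dt=(1+t)^{-2}$, the density of $\pi$ is immediately given by
\[
f_\pi(u)\;=\;f_X\!\Big(\frac{u}{1-u}\Big)\cdot\frac{1}{(1-u)^{2}},\qquad u\in(0,1).
\]
Thus $f_\pi$ exists whenever $f_X$ does, and it remains only to analyze its asymptotic behavior as $u\uparrow 1$.

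Next, I would substitute the hypothesis \eqref{limw} into the change-of-variables formula. Writing $t=u/(1-u)\to\infty$ as $u\uparrow 1$, the asymptotic $f_X(t)\sim t^{-(\beta+1)}L_X(t)$ gives
\[
f_\pi(u)\;\sim\;\Big(\frac{u}{1-u}\Big)^{-(\beta+1)}L_X\!\Big(\frac{u}{1-u}\Big)\cdot (1-u)^{-2}
\;=\;u^{-(\beta+1)}(1-u)^{\beta-1}\,L_X\!\Big(\frac{u}{1-u}\Big).
\]
The factor $u^{-(\beta+1)}$ tends to $1$ as $u\uparrow 1$, so it can be absorbed into the asymptotic equivalence.

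The only remaining point is to replace $L_X(u/(1-u))$ by $L_X(1/(1-u))$. Here I would invoke the Uniform Convergence Theorem for slowly varying functions (Bingham--Goldie--Teugels), which states that for any $L$ slowly varying at infinity, $L(\lambda x)/L(x)\to 1$ as $x\to\infty$ uniformly for $\lambda$ in compact subsets of $(0,\infty)$. Writing $u/(1-u)=u\cdot(1/(1-u))$ with $x:=1/(1-u)\to\infty$ and $\lambda:=u\to 1$, the ratio $\lambda$ lies eventually in the compact set $[\tfrac12,1]$, so uniform convergence yields
\[
\frac{L_X(u/(1-u))}{L_X(1/(1-u))}\;\longrightarrow\;1\qquad (u\uparrow 1).
\]
Combining with the previous display produces $f_\pi(u)\sim (1-u)^{\beta-1}L_X(1/(1-u))$, as claimed.

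The main obstacle is really just the last step: one must be careful that the ratio $\lambda=u$ is itself approaching $1$ while $x\to\infty$, so the pointwise definition of slowly varying is insufficient and the uniform version of the convergence must be invoked. Everything else is routine manipulation of the change-of-variables formula.
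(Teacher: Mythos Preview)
Your proof is correct and follows essentially the same route as the paper: change of variables to obtain $f_\pi(u)=f_X\!\big(u/(1-u)\big)(1-u)^{-2}$, substitution of the regular-variation hypothesis, and a slowly-varying comparison to pass from the argument $u/(1-u)$ to $1/(1-u)$. The paper writes this last step as $L_X(t)\sim L_X(t+1)$ (with $t=u/(1-u)$, so $1/(1-u)=t+1$), while you phrase it as $L_X(\lambda x)/L_X(x)\to 1$ with $\lambda=u\to 1$ and invoke the Uniform Convergence Theorem explicitly; these are two formulations of the same fact, and your version is arguably the more careful one.
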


Therefore, Proposition~\ref{proposition:cmp} complements Theorem~\ref{teomain} by providing results on survival or extinction through the odds distribution. Moreover, one can also note that condition \eqref{limw} is equivalent to 
\(f_X\) being regularly varying with index \(\rho=-(\beta+1)\) at infinity.

We now state a transfer principle showing that regularly varying tails are stable under multiplication by an independent factor possessing a moment slightly above the tail index. Specifically, if
\(f_X(t)\sim t^{-(\beta+1)}L_X(t)\) as \(t\to\infty\) and \(Y\ge 0\) is absolutely continuous and independent of \(X\) with
\(\mathbb{E}[Y^{\beta+\varepsilon}]<\infty\) for some \(\varepsilon>0\), then the product \(Z:=XY\) retains the same tail index \(\beta\) and carries over the slowly varying factor \(L_X\). The precise statement is as follows.

\begin{proposition}\label{thm:B}
Let \(X\) have density \(f_X\) satisfying
\begin{equation}\label{eq:fX-RV}
f_X(t)\ \sim t^{-(\beta+1)}\,L_X(t)\qquad (t\to\infty),
\end{equation}
for some \(\beta>0\) and \(L_X\) slowly varying at \(\infty\).
Let \(Y\ge 0\) be absolutely continuous and \emph{independent} of \(X\) and suppose there exists \(\varepsilon>0\) such that
\(\mathbb{E}[Y^{\beta+\varepsilon}]<\infty\).
Define \(Z:=XY\). Then, as \(z\to\infty\),
\begin{equation}\label{eq:fZ-asymp}
f_Z(z)\ \sim z^{-(\beta+1)}\,L_Z(z),
\end{equation}
with slowly varying factor
\[L_Z(z):=\mathbb{E}[Y^{\beta}]\,L_X(z)\]
\end{proposition}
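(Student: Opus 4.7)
The plan is to work directly with the convolution-type density formula for the product and upgrade pointwise convergence to an integrated statement via dominated convergence, with Potter's bounds controlling the slowly varying factor and the moment hypothesis on $Y$ handling the bounded-argument regime of $f_X$. Since $X$ and $Y$ are independent and absolutely continuous, $Z=XY$ has density
\[
f_Z(z)=\int_0^\infty \frac{f_X(z/y)}{y}\,f_Y(y)\,dy,
\]
and dividing by $z^{-(\beta+1)}L_X(z)$ rewrites this as
\[
\frac{f_Z(z)}{z^{-(\beta+1)}L_X(z)}=\int_0^\infty y^{\beta}\cdot\frac{f_X(z/y)}{(z/y)^{-(\beta+1)}L_X(z/y)}\cdot\frac{L_X(z/y)}{L_X(z)}\,f_Y(y)\,dy.
\]
For each fixed $y>0$, hypothesis \eqref{eq:fX-RV} sends the middle factor to $1$ and slow variation of $L_X$ sends the last ratio to $1$, so the integrand converges pointwise to $y^{\beta}f_Y(y)$, whose total mass is $\mathbb{E}[Y^{\beta}]$.

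To turn this into $L^1$ convergence, I would fix $\delta\in(0,\min(\beta,\varepsilon))$. Potter's theorem supplies constants $C,T_0$ with $L_X(z/y)/L_X(z)\le C\max(y^\delta,y^{-\delta})$ whenever $z/y\ge T_0$, and \eqref{eq:fX-RV} supplies $T_1$ so that $f_X(t)\le 2\,t^{-(\beta+1)}L_X(t)$ on $[T_1,\infty)$. Setting $T:=\max(T_0,T_1)$, on the bulk region $y\in(0,z/T]$ the integrand is majorized by $2C\max(y^{\beta+\delta},y^{\beta-\delta})\,f_Y(y)$, which is $f_Y$-integrable: the factor $y^{\beta-\delta}$ is bounded near $0$ because $\beta-\delta>0$, and $\mathbb{E}[Y^{\beta+\delta}]<\infty$ since $\delta<\varepsilon$. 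Dominated convergence then gives
\[
\int_{0}^{z/T} y^{\beta}\cdot\frac{f_X(z/y)}{(z/y)^{-(\beta+1)}L_X(z/y)}\cdot\frac{L_X(z/y)}{L_X(z)}\,f_Y(y)\,dy\ \longrightarrow\ \mathbb{E}[Y^{\beta}].
\]

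It then remains to show that the complementary piece
\[
R(z):=\int_{z/T}^{\infty}\frac{f_X(z/y)}{y}\,f_Y(y)\,dy
\]
is $o\bigl(z^{-(\beta+1)}L_X(z)\bigr)$. On this range $z/y\in(0,T]$, so $f_X(z/y)$ is evaluated on a bounded interval; assuming local boundedness of $f_X$ on $(0,T]$ by some constant $M$, together with $1/y\le T/z$ and Markov's inequality applied with $\mathbb{E}[Y^{\beta+\varepsilon}]<\infty$, I obtain
\[
R(z)\ \le\ M\cdot\frac{T}{z}\,\mathbb{P}(Y>z/T)\ \le\ C'\,z^{-(\beta+\varepsilon+1)}.
\]
Since slow variation yields $1/L_X(z)=o(z^\delta)$ for any $\delta>0$, and $\delta<\varepsilon$, one concludes $R(z)\,z^{\beta+1}/L_X(z)=O(z^{\delta-\varepsilon})\to 0$, which combined with the previous step gives the claim.

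The step I expect to be the main obstacle is exactly this control of $R(z)$, sitting at the interface between the asymptotic regime governed by \eqref{eq:fX-RV} and the bounded-argument regime where only the normalization $\int f_X=1$ constrains the density. The cleanest route invokes mild local boundedness of $f_X$, a regularity customary in Breiman-type theorems for densities; without it, one must couple the unit mass of $f_X$ against the $(\beta+\varepsilon)$-moment of $Y$ through a more delicate integral estimate. The squeeze on the Potter exponent $\delta$ (small enough to keep $y^{\beta-\delta}$ integrable near $0$, large enough so $z^{\delta-\varepsilon}$ still decays) is harmless because $\beta>0$ and $\varepsilon>0$ leave a non-empty window.
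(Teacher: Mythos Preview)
Your overall strategy---the product density formula, pointwise convergence of the normalized integrand to $y^{\beta}f_Y(y)$, and dominated convergence via Potter bounds---matches the paper exactly. The difference lies in how the large-$y$ region is dominated, and there your argument has a real gap.

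Your split at $y=z/T$ and the bound on $R(z)$ require $f_X$ to be locally bounded on $(0,T]$, which is not among the hypotheses (and is not implied by $f_X\in L^1$ together with the tail asymptotic). You flag this yourself, but the fallback you sketch---``coupling the unit mass of $f_X$ against the $(\beta+\varepsilon)$-moment of $Y$''---is not what the paper does and does not appear easy to carry out, since neither $f_X$ nor $f_Y$ is assumed bounded near the origin.

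The paper avoids isolating a remainder altogether. For $0<y\le 1$ it uses the standard Potter inequality for $f_X$, exactly as in your bulk bound. For $y>1$ it observes that $g(t):=f_X(t)\,t^{\beta+1+\delta}$ is regularly varying of \emph{positive} index $\delta$, so the Uniform Convergence Theorem (BGT, Thm.~1.5.2(ii)) gives
\[
\sup_{\lambda\in(0,1]}\Bigl|\frac{g(\lambda z)}{g(z)}-\lambda^{\delta}\Bigr|\xrightarrow[z\to\infty]{}0.
\]
Taking $\lambda=1/y$ this yields, for every $\varepsilon'>0$ and all large $z$,
\[
f_X(z/y)\ \le\ 2\,y^{\beta+1}\bigl(\varepsilon' y^{\delta}+1\bigr)\,f_X(z)\qquad\text{for all }y>1,
\]
hence the normalized integrand is dominated on $(1,\infty)$ by $C\,y^{\beta}\bigl(\varepsilon' y^{\delta}+1\bigr)f_Y(y)$, which is integrable because $\delta<\varepsilon$ and $\mathbb{E}[Y^{\beta+\varepsilon}]<\infty$. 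The crucial point is that uniformity on $(0,1]$ lets $\lambda=1/y$ range all the way to $0$, so the bound covers arbitrarily small $z/y$ without ever evaluating $f_X$ on a bounded interval directly. This positive-index UCT device is the missing idea that removes the need for your local-boundedness assumption.
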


\medskip
\noindent

\section{Proofs}\label{Proof} 

This chapter contains the proofs of the main results stated in Section~2. The analysis relies on sharp asymptotic estimates for the maximal displacement of a single particle during its lifetime, together with a percolation-style comparison that translates these estimates into survival or extinction of the full system.

\subsection{Survival and extinction}

For a particle $i$ initially at $z\in\mathbb{Z}$, define the maximal one–sided and two–sided displacements during its lifetime:
\[
D^{\rightarrow}_{z,i}:=\max_{0\le n\le L_{z,i}}\big(S^{z,i}_n-z\big),\qquad
D^{\leftarrow}_{z,i}:=\max_{0\le n\le L_{z,i}}\big(z-S^{z,i}_n\big),\,
\]
\[D^{*}_{z,i}:=\max_{0\le n\le L_{z,i}}\big(|S^{z,i}_n-z|\big)=D^{\rightarrow}_{z,i}\vee D^{\leftarrow}_{z,i}.\]
By homogeneity, write $D^{\rightarrow}$ and $D^{*}$ for generic copies. A key identity (see \cite[Lemma~2.1]{Lebensztayn2005} below) is the exact conditional tail
\begin{equation}\label{eq:tail-right}
\mathbb{P}\!\big(D^{\rightarrow}\ge n\mid \pi=p\big)=\Big(\frac{1-\sqrt{1-p^{2}}}{p}\Big)^{\!n},\qquad n\in\mathbb{N},
\end{equation}
and, by symmetry,
\begin{equation}\label{eq:dstar-half}
\mathbb{P}(D^{\rightarrow}\ge n)=\mathbb{P}(D^{\leftarrow}\ge n)\ \ge\ \tfrac12\,\mathbb{P}(D^{*}\ge n),\qquad n\in\mathbb{N}.
\end{equation}

%\subsection*{A criterion reducing survival to one–particle tail asymptotics}

We rely on a percolation comparison that translates survival/extinction into sharp asymptotics for $D^{\rightarrow}$ and $D^{*}$. We will use Proposition~1.2 in \cite{CarvalhoMachado2025}, which states that

If $E(\eta)<\infty$ and
\[
\limsup_{n\to\infty} n\,\mathbb{P}(D^{*}\ge n)\le 2\limsup_{n\to\infty} n\,\mathbb{P}(D^{\rightarrow}\ge n)\ <\ \frac{1}{2E(\eta)},
\]
then $\mathbb{P}\!\big[\mathrm{FM}(\mathbb{Z},\pi,\eta)\ \mathrm{survives}\big]=0$.
Conversely, if
\[
\liminf_{n\to\infty} n\,\mathbb{P}(D^{\rightarrow}\ge n)\ >\ \frac{1}{E(\eta)}
\quad \textup{(with $1/E(\eta):=0$ when $E(\eta)=\infty$)},
\]
then $\mathbb{P}\!\big[\mathrm{FM}(\mathbb{Z},\pi,\eta)\ \mathrm{survives}\big]>0$.

By \eqref{eq:tail-right} and integrating over $\pi$, the decisive quantity becomes
\begin{equation}\label{eq:key-integral}
n\,\mathbb{P}(D^{\rightarrow}\ge n)\ =\ \int_{0}^{1} n\Big(\frac{1-\sqrt{1-x^{2}}}{x}\Big)^{\!n} f_\pi(x)\,dx,
\end{equation}
whenever $\pi$ has density $f_\pi$; for upper bounds we also use
\begin{equation}\label{I_ni}
I_n:=\int_{0}^{1} n\Big(\frac{1-\sqrt{1-x}}{x}\Big)^{\!n} f_\pi(x)\,dx,
\end{equation}
exploiting $\sqrt{1-x^{2}}\ge \sqrt{1-x}$, and for lower bounds
\begin{equation}\label{J_ni}
J_n:=\int_{0}^{1} n\Big(\frac{1-\sqrt{1-x^{2}}}{x}\Big)^{\!n} f_\pi(x)\,dx.
\end{equation}

So, to conclude Theorem~\ref{teomain} (i), it suffices to show that
\begin{align}
\beta>\tfrac12&:\quad I_n\xrightarrow[n\to\infty]{}0,\quad\text{hence } \limsup_{n\to\infty} n\,\mathbb{P}(D^{*}\ge n)= 0;\label{eq:beta-gt}\\[1pt]
\beta=\tfrac12&:\quad \limsup_{n\to\infty} I_n\ \le\ 2\,\limsup_{n\to\infty} L(n^{2}), \label{eq:beta-eq0}\\[1pt]
\phantom{\beta=\tfrac12}&\qquad\text{thus } \limsup_{n\to\infty} n\,\mathbb{P}(D^{*}\ge n)\le4\,\limsup_{n\to\infty} L(n^{2}).\notag
\end{align}

Likewise, we prove Theorem~\ref{teomain} (ii) by showing that
\begin{align}
\beta<\tfrac12&:\quad \liminf_{n\to\infty} J_n=\infty,\quad\text{hence } \liminf_{n\to\infty} n\,\mathbb{P}(D^{\rightarrow}\ge n)=\infty;\label{eq:beta-lt}\\[1pt]
\beta=\tfrac12&:\quad \liminf_{n\to\infty} J_n\ \ge\ \sqrt{2}\,\liminf_{n\to\infty} L(n^{2}),\label{eq:beta-eq}\\[1pt]
\phantom{\beta=\tfrac12}&\qquad \text{thus } \liminf_{n\to\infty} n\,\mathbb{P}(D^{\rightarrow}\ge n)\ge\sqrt{2}\,\liminf_{n\to\infty} L(n^{2}).\notag
\end{align}

\begin{proof}[Proof of Theorem \ref{teomain}]

Assume that for some \(\beta>0\),
\begin{equation}\label{eq:U-edge}
f_\pi(u)\ \sim\ (1-u)^{\beta-1}L\Big(\frac{1}{1-u}\Big)\qquad (u\uparrow 1).
\end{equation}

Then, for every \(\varepsilon\in(0,1)\) there exists \(\eta_\varepsilon\in(0,1)\) such that for all $u\in(1-\eta_\varepsilon,1)$,
\begin{equation}\label{limitU1}
(1-\varepsilon)\,(1-u)^{\beta-1}L\Big(\frac{1}{1-u}\Big)\ \le\ f_\pi(u)\ \le\ (1+\varepsilon)\,(1-u)^{\beta-1}L\Big(\frac{1}{1-u}\Big).
\end{equation}

\noindent
\textbf{Proof of Theorem \ref{teomain} (i):} For \(n\in\mathbb N\), define
\[
I_n\ :=\ \int_{0}^{1} n\Big(\frac{1-\sqrt{1-x}}{x}\Big)^{\!n} f_\pi(x)\,dx.
\]

Note that
\[
\frac{1-\sqrt{1-x}}{x}=\frac{1}{1+\sqrt{1-x}},
\]
hence, defining
\[
K_n(x)\ :=\ n\Big(1+\sqrt{1-x}\Big)^{-n},
\]
we have \(I=\int_{0}^{1} K_n(x)\,f_\pi(x)\,dx\).

Fix \(\varepsilon\in(0,1)\) and let \(\eta_\varepsilon\in(0,1)\) as in (\ref{limitU1}).
Split
\begin{equation}\label{An1}
I_n=A_n+B_n,\qquad
A_n:=\int_{0}^{1-\eta_\varepsilon}K_n(x)f_\pi(x)\,dx,\quad B_n:=\int_{1-\eta_\varepsilon}^{1}K_n(x)f_\pi(x)\,dx.
\end{equation}

For \(x\in(0,1-\eta_\varepsilon]\), \(\sqrt{1-x}\ge \sqrt{\eta_\varepsilon}\), hence
\begin{equation}\label{An2}
0\le K_n(x)\ \le\ n\,\rho^n,\qquad \rho:=\frac{1}{1+\sqrt{\eta_\varepsilon}}\in(0,1).
\end{equation}

Therefore, \(0\le A_n\le n\,\rho^n\), which tends to \(0\).

Now, for \(x\in (1-\eta_\varepsilon,1)\), set \(s=\sqrt{1-x}\) (i.e., \(x=1-s^2\), \(dx=-2s\,ds\)), then let \(t=ns\) (so \(dt=n\,ds\)).
Without any approximation,
\[
\begin{aligned}
B_n
&=\int_{0}^{\sqrt{\eta_\varepsilon}} 2n\,s\,(1+s)^{-n}\,f_\pi(1-s^2)\,ds\\
&=\frac{2}{n}\int_{0}^{n\sqrt{\eta_\varepsilon}} t\Big(1+\frac{t}{n}\Big)^{-n}\,f_\pi\!\Big(1-\frac{t^2}{n^2}\Big)\,dt.
\end{aligned}
\]

We use the standard inequality, valid for all \(t\ge 0\) and \(n\in\mathbb N\):
\[
\Big(1+\frac{t}{n}\Big)^{-n}\ \le\ e^{-t},
\]
which follows from \((1+x)^{-1}\le e^{-x}\) for \(x\ge 0\).

Moreover, for \(t\in[0,n\sqrt{\eta_\varepsilon}]\) we have \(1-\frac{t^2}{n^2}\in(1-\eta_\varepsilon,1)\), so by (\ref{limitU1}),
\[
f_\pi\!\Big(1-\frac{t^2}{n^2}\Big)\ \le\ (1+\varepsilon)\,
\Big(\frac{t^2}{n^2}\Big)^{\beta-1}L\Big(\frac{n^2}{t^2}\Big).
\]
Let \(\varepsilon\in(0,\beta)\) and \(A>1\) be fixed. Without loss of generality, assume \(\sqrt{\eta_\varepsilon}\le 1/\sqrt{X(A,\varepsilon)}\), where \(X(A,\varepsilon)\) is as in Lemma~\ref{lem:Potter}; otherwise, replace \(\eta_\varepsilon\) by \(\hat{\eta}_\varepsilon\in(0,1/X(A,\varepsilon))\) and, for simplicity of notation, continue to denote it by \(\eta_\varepsilon\). In particular, we then have \([0,n\sqrt{\eta_\varepsilon}]\subset [0,n/\sqrt{X(A,\varepsilon)}]\). Combining the previous displays and taking \(n\) sufficiently large, by \eqref{eq:Potter-global} from Lemma~\ref{lem:Potter}, we obtain

\begin{equation}\label{Bn1}
\begin{aligned}
0\le B_n
&\le \frac{2}{n}\int_{0}^{n\sqrt{\eta_\varepsilon}} t\Big(1+\frac{t}{n}\Big)^{-n}
\,(1+\varepsilon)\Big(\frac{t^2}{n^2}\Big)^{\beta-1}L\!\Big(\frac{n^2}{t^2}\Big)\,dt\\[3pt]
&\le 2(1+\varepsilon)\,n^{1-2\beta}\int_{0}^{n\sqrt{\eta_\varepsilon}}
t^{2\beta-1}\Big(1+\frac{t}{n}\Big)^{-n}A\,\max\{t^{2\varepsilon},\,t^{-2\varepsilon}\}\,L(n^{2})\,dt\\[3pt]
&\le 2(1+\varepsilon)A\,n^{1-2\beta}L(n^2)\Bigg(\int_{1}^{\infty} t^{2(\beta+\varepsilon)-1}e^{-t}\,dt+\int_{0}^{1} t^{2(\beta-\varepsilon)-1}e^{-t}\,dt\Bigg)\\[3pt]
&= 2(1+\varepsilon)A_{\varepsilon,\beta}\,\,n^{1-2\beta}L(n^2).
\end{aligned}
\end{equation}
where \(A_{\varepsilon,\beta}:=A\Big(\int_{1}^{\infty} t^{2(\beta+\varepsilon)-1}e^{-t}\,dt+\int_{0}^{1} t^{2(\beta-\varepsilon)-1}e^{-t}\,dt\Big)<\infty\). 

If \(\beta>\frac{1}{2}\) then \(1-2\beta<0\) and \(n^{\,1-2\beta}L(n^2)\to 0\) as \(n\to\infty\). Therefore, by (\ref{An1})–(\ref{Bn1}),
\begin{equation}\label{epsilonc}
0\ \le\ I_n\ \le\ n\,\rho^n\ +\ 2(1+\varepsilon)A_{\varepsilon,\beta}\,\,n^{1-2\beta}L(n^2).
\end{equation}
Also \(n\rho^n\to 0\). Since \(\varepsilon\in(0,\beta)\) is arbitrary, it follows that
\[
\lim_{n\to\infty} I_n=0.
\]

In addition, if $\beta=\tfrac{1}{2}$, by the Dominated Convergence Theorem, letting $\varepsilon\downarrow 0$ and $A\downarrow 1$ in \eqref{epsilonc}, we obtain
\[
\limsup_{n\to\infty} I_n\ \le\ 2\,\limsup_{n\to\infty}L(n^2).
\]

Thus, \eqref{eq:beta-gt} and \eqref{eq:beta-eq0} hold, and item \textup{(i)} follows.

\(\hfill\square\)

% {\color{red}

% Assume that for some \(\beta>0\) there is a constant \(D>0\) such that
% \begin{equation}\label{eq:U-edge}
% f_U(u)\ \sim\ D\,(1-u)^{\beta-1}\qquad (u\uparrow 1).
% \end{equation}
% (Equivalently, for the odds \(W=\frac{U}{1-U}\) with density \(f_W\),
% \(f_W(w)\sim D\,w^{-(\beta+1)}\) as \(w\to\infty\); and \(f_U(u)=f_W(\frac{u}{1-u})/(1-u)^2\).)
% }

% -------------------------------------------------------------------
% 1) Kernel rewrite and splitting
% -------------------------------------------------------------------

\noindent
\textbf{Proof of Theorem \ref{teomain} (ii):}

%\[
%\frac{1-\sqrt{1-x^2}}{x}\ =\ \frac{1-s}{\sqrt{\,1-s^2\,}},\qquad s:=\sqrt{\,1-x^2\,}\in[0,1).
%\]

For \(n\in\mathbb N\), define \(R_n(x):=n\big(\frac{1-\sqrt{1-x^2}}{x}\big)^{n}\), and
\[
J_n\ :=\ \int_{0}^{1} R_n(x)\,f_\pi(x)\,dx.
\]

%We define
%\[
%J_n=C_n+D_n,\qquad
%C_n:=\int_{0}^{1-\eta_\varepsilon}R_n(x)\,f_\pi(x)\,dx,\quad
%D_n:=\int_{1-\eta_\varepsilon}^{1}R_n(x)\,f_\pi(x)\,dx.
%\]
%On \((0,1-\eta_\varepsilon]\) the base \(\frac{1-\sqrt{1-x^2}}{x}\le \frac{1-\sqrt{1-x}}{x} \le \rho<1\) uniformly, hence
%\begin{equation}\label{C_n_lim}
%0\le C_n\ \le\ n\,\rho^n\int_{z}^{1-\eta_\varepsilon} f_\pi(x)\,dx\ \le\ n\,\rho^n\ \xrightarrow[n\to\infty]{}0,
%\end{equation}
%so \(C_n\) is exponentially small. It remains to analyze \(D_n\).

% -------------------------------------------------------------------
% 2) Exact change of variables near the edge x \uparrow 1
% -------------------------------------------------------------------

For \(x\in (0,1)\), set \(s=\sqrt{1-x^2}\) (so \(x=\sqrt{1-s^2}\) and \(dx=-\frac{s}{\sqrt{1-s^2}}\,ds\)).
Then
\[
\begin{aligned}
J_n
&=\int_{0}^{1} n\Big(\frac{1-s}{\sqrt{1-s^2}}\Big)^{n}
\frac{s}{\sqrt{1-s^2}}\,f_\pi\big(\sqrt{1-s^2}\big)\,ds.
\end{aligned}
\]
Making the scaling \(t=ns\) (so \(ds=dt/n\) and \(t\in(0,n)\)) leads to
\begin{equation}\label{eq:Bn-t}
J_n=\frac{1}{n}\int_{0}^{n} t\,\Biggl(\frac{1-\frac{t}{n}}{\sqrt{\,1-\frac{t^2}{n^2}\,}}\Biggr)^{\!n}
\frac{1}{\sqrt{\,1-\frac{t^2}{n^2}\,}}\ f_\pi\!\biggl(\sqrt{\,1-\frac{t^2}{n^2}\,}\biggr)\,dt.
\end{equation}

We now seek suitable bounds for each factor in the integrand. For \(s\in(0,1)\), define 
\[r(s):=\frac{1-s}{\sqrt{1-s^2}}\in(0,1).\] 
By Lemma \ref{lemma_aproxt} (iii) \(\lim_{s\downarrow 0}\frac{-\log r(s)}{s}=1\). By the definition of limit, for any \(\varepsilon\in(0,\beta)\subset(0,1/2]\), there exists \(\delta_{\varepsilon,1}\in(0,1)\) such that \(\bigl|\frac{-\log r(s)}{s}-1\bigr|<\varepsilon\) for all \(s\in(0,\delta_{\varepsilon,1}]\). Hence
\[
(1-\varepsilon)\,s\ \le\ -\log r(s)\ \le\ (1+\varepsilon)\,s,\qquad s\in(0, \delta_{\varepsilon,1}].
\]
Exponentiating yields \(e^{-(1+\varepsilon) s} \le r(s)\le e^{-(1-\varepsilon) s}\) for all \(s\in[0,\delta_{\varepsilon,1}]\). Taking again \(s=t/n\), we deduce
\begin{equation}\label{cotar}
e^{-(1+\varepsilon) t}\ \le\ \Biggl(\frac{1-\frac{t}{n}}{\sqrt{1-\frac{t^2}{n^2}}}\Biggr)^{\!n}
=r\!\Bigl(\frac{t}{n}\Bigr)^{\!n}\ \le\ e^{-(1-\varepsilon) t},\qquad t\in(0, n\delta_{\varepsilon,1}].
\end{equation}

Moreover, for any fixed \(t>0\), the bounds in \eqref{cotar} can be made arbitrarily close to \(e^{-t}\), and choosing $n$ large enough so that \(t<n\delta_{\varepsilon,1}\) ensures that the inequality holds. Consequently, we have the pointwise convergence
\begin{equation}\label{eq:converge}
    \lim_{n\to \infty}r\!\Bigl(\frac{t}{n}\Bigr)^{\!n}=e^{-t}.
\end{equation}
%In addition, from Lemma \ref{lemma_aproxt} (iii), Existe $K>0$ y $\delta_{\varepsilon,1}\geq \delta_0>0$ tales que:
%\[
%\bigl|\log r(s)+s\bigr|\ \le\ K\,s^{3}\qquad (|s|\le \delta_0),
%\]

 %Therefore, poniendo \(s=\frac{t}{n}\) for every \(t\in[0,n\delta_0]\), 
%\[
%\Bigl|\,n\log r\!\Bigl(\frac{t}{n}\Bigr)+t\Bigr|
%\ \le\ K\,\frac{t^3}{n^2}.
%\]
%In particular,
%\[
%r\!\Bigl(\frac{t}{n}\Bigr)^{n}
%=\exp\!\Bigl(-t+\varepsilon_{n}(t)\Bigr)
%=e^{-t}\,e^{\varepsilon_{n}(t)},\qquad 
%\bigl|\varepsilon_{n}(t)\bigr|\ \le\ K\,\frac{t^3}{n^2}.
%\]
%Fix \(T>0\). Then there exists \(N\) such that if \(n\ge N\), then \(T<n\delta_{0}\). Thus, for all \(t\in[0,T]\subset[0,n\delta_{0}]\) (for all \(n>N\)) we have
%\(\sup_{t\in[0,T]}|\varepsilon_{n}(t)|\le K\,T^{3}/n^{2}\to 0\). Hence
%\begin{equation}\label{eq:uniforme_compacto}
%\sup_{t\in[0,T]}\ \Bigl|\,r\!\Bigl(\frac{t}{n}\Bigr)^{n}-e^{-t}\Bigr|\ \longrightarrow\ 0
%\qquad(n\to\infty).
%\end{equation}

%that is, the convergence is \emph{uniform on each compact interval} \( [0,T] \).

% Also, for such \(t\in [0,n\delta_0]\),
% \[
% \frac{1}{\sqrt{\,1-\frac{t^2}{n^2}\,}}\ \le\ \frac{1}{\sqrt{\,1-\delta_0^2\,}}=:C_{\delta_0}.
% \]

Next, we bound \(f_\pi(\sqrt{1-s^2})\) for $s$ near \(0\). Note that by Lemma \ref{lemma_aproxt} (ii), we have, as \(s\downarrow0\),
\[
1-\sqrt{1-s^2}= \frac{s^2}{2}+\mathcal O(s^4)=\frac{s^2}{2}(1+\mathcal O(s^2)).
\]
Applying Lemma \ref{lemma_aproxt} (i) gives \((1-\sqrt{1-s^2})^{\beta-1}=2^{\,1-\beta} s^{2\beta-2}(1+\mathcal O(s^2))\) as \(s\downarrow0\). Therefore, for every \(\varepsilon\in(0,\beta)\), there exists \(\delta_{\varepsilon,2}\in(0,\delta_{\varepsilon,1}]\) such that
\[(1-\varepsilon)\,2^{\,1-\beta}\,s^{2\beta-2}\ \le\ \bigl(1-\sqrt{1-s^2}\bigr)^{\beta-1},
\qquad s\in(0,\delta_{\varepsilon,2}].\]

Combining this with equations (\ref{epsilon_L_2}) and (\ref{limitU1}) in which \(u=\sqrt{1-s^2}\), we conclude that there exists \(\delta^*_{\varepsilon}\in(0,\delta_{\varepsilon,2}]\) such that
\begin{equation}\label{eq:fU-two-sided}
(1-\varepsilon)^3\,2^{\,1-\beta}\,s^{2\beta-2}L\Big(\frac{1}{s^2}\Big)\ \le\
f_\pi\big(\sqrt{1-s^2}\big),
\qquad s\in(0,\delta^*_{\varepsilon}].
\end{equation}

Now define 
\begin{equation}\label{D_n}
D_n:=\frac{1}{n}\int_{0}^{n\delta^*_{\varepsilon}}t\,\Biggl(\frac{1-\frac{t}{n}}{\sqrt{\,1-\frac{t^2}{n^2}\,}}\Biggr)^{\!n}
\frac{1}{\sqrt{\,1-\frac{t^2}{n^2}\,}}\ f_\pi\!\biggl(\sqrt{\,1-\frac{t^2}{n^2}\,}\biggr)\,dt\
\end{equation}
and note that \(D_n\le J_n\).

Combining \eqref{D_n} with \eqref{eq:fU-two-sided} and applying the Potter's bound \eqref{eq:Potter-global} from Lemma~\ref{L_equiv}, we obtain

\[
\begin{aligned}
D_{n}&\ \ge\ n^{1-2\beta}\int_{0}^{T} t\,\Biggl(\frac{1-\frac{t}{n}}{\sqrt{\,1-\frac{t^2}{n^2}\,}}\Biggr)^{\!n}
\frac{1}{\sqrt{\,1-\frac{t^2}{n^2}\,}}\ (1-\varepsilon)^2\,2^{\,1-\beta}
t^{2\beta-2}L\Bigl(\frac{n^2}{t^2}\Bigr)\,dt\\[3pt]
&\ge (1-\varepsilon)^2A^{-1}2^{\,1-\beta}\,\ n^{1-2\beta}L(n^2)\int_{0}^{T} t\,\Biggl(\frac{1-\frac{t}{n}}{\sqrt{\,1-\frac{t^2}{n^2}\,}}\Biggr)^{\!n}
\frac{1}{\sqrt{\,1-\frac{t^2}{n^2}\,}}\,
t^{2\beta-2}\,\min\{t^{2\varepsilon},\,t^{-2\varepsilon}\}dt
\end{aligned}
\]

for any fixed \(T>0\), \(\varepsilon\in(0,\beta)\), \(A>1\), and sufficiently large \(n\in\mathbb{N}\) so that $T\le n\delta^*_\varepsilon$. 

Additionally, for \(t\in[0,T]\),
\[
1\leq \frac{1}{\sqrt{\,1-\frac{t^2}{n^2}\,}}\leq \frac{1}{\sqrt{\,1-\frac{T^2}{n^2}\,}}\ \xrightarrow[n\to\infty]{}\ 1,
\]
and by (\ref{cotar}) there exists \(N\in \mathbb{N}\) such that, if \(n\geq N\),
\[
t\,\Biggl(\frac{1-\frac{t}{n}}{\sqrt{\,1-\frac{t^2}{n^2}\,}}\Biggr)^{\!n}
\frac{1}{\sqrt{\,1-\frac{t^2}{n^2}\,}}\ 
t^{2\beta-2}\min\{t^{2\varepsilon},\,t^{-2\varepsilon}\}
\ \leq\ 2 e^{-(1-\varepsilon) t}t^{2\beta-1}\min\{t^{2\varepsilon},\,t^{-2\varepsilon}\},\qquad t\in [0,T].
\]

Moreover, \(2 e^{-(1-\varepsilon) t}t^{2\beta-1}\min\{t^{2\varepsilon},\,t^{-2\varepsilon}\}\) is integrable on \([0,T]\), and by (\ref{eq:converge}) the hypotheses of the Dominated Convergence Theorem are satisfied. Therefore,
\[
\int_{0}^{T}t\,\Biggl(\frac{1-\frac{t}{n}}{\sqrt{\,1-\frac{t^2}{n^2}\,}}\Biggr)^{\!n}
\frac{1}{\sqrt{\,1-\frac{t^2}{n^2}\,}}\ 
t^{2\beta-2}\,\min\{t^{2\varepsilon},\,t^{-2\varepsilon}\}dt\ \xrightarrow[n\to\infty]{}\ \int_{0}^{T}e^{- t}t^{2\beta-1}\, \min\{t^{2\varepsilon},\,t^{-2\varepsilon}\}\,dt.
\]

Then,
\begin{equation}\label{D_n_limit}
\liminf_{n\to\infty}\ \frac{D_{n}}{n^{\,1-2\beta}L(n^2)}
\ \ge\ (1-\varepsilon)^2A^{-1}\,2^{\,1-\beta}\int_{0}^{T} t^{2\beta-1}e^{-t}\,\min\{t^{2\varepsilon},\,t^{-2\varepsilon}\}\,dt.
\end{equation}

Note that (\ref{D_n_limit}) holds for every \(T>0\), \(\varepsilon\in(0,\beta)\), and \(A>1\). Additionally, \(\min\{t^{2\varepsilon},\,t^{-2\varepsilon}\}\le t^{-2\varepsilon}\) and \(t^{2(\beta-\varepsilon)-1}e^{-t}\) is integrable on \([0,T]\). Then, by the Dominated Convergence Theorem, letting \(\varepsilon\downarrow 0\) we obtain that 
\[
\liminf_{n\to\infty}\ \frac{D_{n}}{n^{\,1-2\beta}L(n^2)}
\ \ge\ A^{-1}\,2^{\,1-\beta}\int_{0}^{T} t^{2\beta-1}e^{-t}\,dt.
\]

Finally, letting \(T\to\infty\) and \(A\downarrow 1\) yields
\[
\liminf_{n\to\infty}\ \frac{D_{n}}{n^{\,1-2\beta}L(n^{2})}
\ \ge\ 2^{1-\beta}\Gamma(2\beta).
\]

Therefore,

\begin{equation}\label{D_n_+_lim}
\begin{cases}
\liminf_{n\to\infty} D_{n}=\infty, & \text{if }\beta<\tfrac{1}{2},\\[4pt]
\liminf_{n\to\infty} D_{n}\geq \sqrt{2}\,\liminf_{n\to\infty}L(n^2), & \text{if }\beta=\tfrac{1}{2}.
\end{cases}
\end{equation}

Since $J_n\geq D_n$, \eqref{eq:beta-lt} and \eqref{eq:beta-eq} hold, and item \textup{(ii)} follows.

\end{proof}

\subsection{Results regarding odds}

This section develops auxiliary results concerning the odds variable which provides an alternative parametrization of the survival law. 

The proofs rely on change-of-variables arguments and on uniform convergence theorems for regularly varying functions. These results ensure that heavy-tailed behavior is stable under natural transformations and confirm that the exponent $\beta$ continues to govern the asymptotics relevant to the frog model.

\begin{proof}[Proof of Proposition~\ref{proposition:cmp}]
Let \(\pi:=\frac{X}{1+X}\). By the change-of-variables theorem, \(\pi\) admits a density given by \(f_\pi(u):=f_X\!\big(u/(1-u)\big)/(1-u)^2\). For \(t=\frac{u}{1-u}\) we have \((1-u)=1/(1+t)\) and
\begin{equation}\label{limw2}
\frac{f_\pi(u)}{(1-u)^{\beta-1}\,L_X\!\big(\tfrac{1}{1-u}\big)}
  = \frac{f_X(t)\,(t+1)^{\beta+1}}{L_X(t+1)}
  = \bigg(\frac{f_X(t)\, t^{\beta+1}}{L_X(t)}\bigg)\,
    \frac{L_X(t)}{L_X(t+1)}\,
    \big(1+1/t\big)^{\beta+1}.
\end{equation}

Since $L_X$ is slowly varying at infinity, we have $L_X(t)\sim L_X(t+1)$, and therefore, by (\ref{limw}), the expression in (\ref{limw2}) converges to $1$ as $t\uparrow \infty$. This completes the proof, since $u\uparrow 1$ as $t\uparrow \infty$.

\end{proof}

\begin{proof}[Proof of Proposition~\ref{thm:B}]
Let \(Z:=XY\). By the change–of–variables theorem, consider
\[
T:\mathbb{R}\times(0,\infty)\to\mathbb{R}\times(0,\infty),\qquad
T(x,y)=(z,y)\ \text{ with }\ z=xy .
\]
For \(y>0\) the inverse map is \(T^{-1}(z,y)=(z/y,y)\), and the Jacobian determinant is
\[
\det \tfrac{\partial(x,y)}{\partial(z,y)}=\frac{1}{y}.
\]
If \(X\) and \(Y\) are independent with densities \(f_X\) and \(f_Y\), then the joint density of \((Z,Y)\) is
\[
f_{Z,Y}(z,y)=\frac{1}{y}\,f_X\!\Bigl(\frac{z}{y}\Bigr)\,f_Y(y),\qquad y>0.
\]
Integrating out \(y>0\) yields the density of \(Z\):
\[
f_Z(z)=\int_{0}^{\infty}\frac{1}{y}\,f_X\!\Bigl(\frac{z}{y}\Bigr)\,f_Y(y)\,dy,\qquad z\in\mathbb{R}.
\]

Define the rescaled integrand
\begin{equation}\label{eq:Hz-def}
H_z(y)\ :=\ \frac{1}{y}\,\frac{f_X(z/y)}{z^{-(\beta+1)}\,L_X(z)},\qquad y>0,
\end{equation}
so that
\[
\frac{f_Z(z)}{z^{-(\beta+1)}\,L_X(z)}\ =\ \int_0^\infty H_z(y)\,f_Y(y)\,dy\ =\ \mathbb{E}\!\left[H_z(Y)\right].
\]

By \eqref{eq:fX-RV}, for each fixed $y>0$,
\[
\frac{f_X(z/y)}{z^{-(\beta+1)}\,L_X(z)}
=\underbrace{\frac{f_X(z/y)}{(z/y)^{-(\beta+1)}\,L_X(z/y)}}_{\longrightarrow\ 1}
\cdot y^{\beta+1}\cdot
\underbrace{\frac{L_X(z/y)}{L_X(z)}}_{\longrightarrow\ 1}
\qquad (z\to\infty).
\]
Hence, by \eqref{eq:Hz-def},
\begin{equation}\label{eq:pointwise}
H_z(y)\ \xrightarrow[z\to\infty]{}y^{\beta}\qquad\text{for every }y>0.
\end{equation}

We use Potter's bounds for regularly varying functions (see \cite[Thm. 1.5.6]{BGT1987}): for every $\delta\in(0,1)$ and $C> 1$ there exist a constant $t_{\delta,C}$ such that
\begin{equation}\label{eq:potter-RV}
\frac{f_X(tu)}{f_X(t)}\ \le C\,u^{-(\beta+1)+\delta}, \, u\ge 1, \qquad t\ge t_{\delta,C}.
\end{equation}

Moreover, by \eqref{eq:fX-RV}, there exists $t_1$ such that $f_X(t)\le 2\,t^{-(\beta+1)}L_X(t)$ for all $t\ge t_1$.
Fix $\delta\in(0,\varepsilon)$ and take $z\ge z_0:=\max\{t_{\delta,C},t_1\}$. With $t=z$ and $u=1/y$ we obtain
\begin{equation}\label{ct1}
\frac{f_X(z/y)}{z^{-(\beta+1)}\,L_X(z)}
=\frac{f_X(z/y)}{f_X(z)}\cdot\frac{f_X(z)}{z^{-(\beta+1)}L_X(z)}
\ \le\ 2\,C\, y^{\beta+1+\delta}, \,\, 0<y\le 1,\\
\end{equation}
Moreover, let \(g(z):=f(z)z^{\beta+1+\delta}\); it is easy to show that \(g\) is regularly varying with index \(\delta\) and the Uniform Convergence Theorem
(see, e.g., \cite[Thm.~1.5.2]{BGT1987}) yields that for every \(b>0\)
\[
\sup_{\lambda\in (0,b]}\Bigg|\frac{g(\lambda z)}{g(z)}-\lambda^{\delta}\Bigg|\xrightarrow[z\to\infty]{}0.
\]

In particular, taking \(b=1\). Now, given \(\varepsilon>0\), there exists \(z_1:=z_1(\varepsilon)>0\) such that if \(z\geq z_1\), then \(\frac{g(\lambda z)}{g(z)\lambda^{\delta}}\leq \varepsilon\lambda^{-\delta}+1\) for all \(\lambda \in (0,1]\). Then, for \(z\geq \max\{z_1,t_1\}\) and \(y>1\), that is, \(\frac{1}{y}\in (0,1]\), we have
\begin{equation}\label{ct2}
\frac{f_X(z/y)}{z^{-(\beta+1)}\,L_X(z)}
=\frac{f_X(z/y)\frac{1}{y^{\beta+1+\delta}}}{f_X(z)\frac{1}{y^{\delta}}}\cdot\frac{1}{\frac{1}{y^{\beta+1}}}\cdot\frac{f_X(z)}{z^{-(\beta+1)}L_X(z)}
\ \le\ 2\, y^{\beta+1}\,(\varepsilon y^{\delta}+1), \,\, y> 1,\\
\end{equation}

Dividing by $y$ (cf. \eqref{eq:Hz-def}) yields the \emph{uniform} bound
\begin{equation}\label{eq:uniform-dom}
0\ \le\ H_z(y)\ \le\ 2\,C y^{\beta+\delta}\,\mathbf{1}_{(0,1]}(y)+2\, y^{\beta}\,(\varepsilon y^{\delta}+1)\,\mathbf{1}_{(1,\infty)}(y),
\quad z\ge z_0\vee z_1.
\end{equation}

Let \(G(y):=2\,C y^{\beta+\delta}\,\mathbf{1}_{(0,1]}(y)+2\, y^{\beta}\,(\varepsilon y^{\delta}+1)\,\mathbf{1}_{(1,\infty)}(y)\). Taking expectation with respect to \(Y\), we have that \(\mathbb{E}\!\left[G(Y)\right]<\infty\). Since \(\delta<\varepsilon\) and \(\mathbb{E}[Y^{\beta+\varepsilon}]<\infty\), we have \(\mathbb{E}[Y^{\beta+\delta}]<\infty\). Therefore, there exists an integrable random variable \(G(Y)\) such that \(0\le H_z(Y)\le G(Y)\) for all \(z\ge z_0\vee z_1\). Then, the Dominated Convergence Theorem gives
\[
\frac{f_Z(z)}{z^{-(\beta+1)}\,L_X(z)}
=\mathbb{E}[H_z(Y)]\ \xrightarrow[z\to\infty]{}\ \mathbb{E}[\,Y^{\beta}].
\]
Multiplying both sides by \(z^{-(\beta+1)}L_X(z)\) yields the asserted asymptotic \eqref{eq:fZ-asymp}.

\end{proof}

\section{Examples}\label{Examples}

This chapter presents examples that illustrate the scope of the main theorem and clarify how the parameter $\beta$ and the slowly varying function $L$ arise in concrete settings. These cases demonstrate that the exponent $\beta$ alone determines the phase transition, while $L$ influences only the boundary behavior. The examples also show that a wide range of probabilistic models naturally fit into the framework developed in the previous sections.

\subsection{Distributions satisfying the main condition}\label{subs:ex1}

This section lists representative survival-parameter distributions whose densities satisfy condition~(\ref{eq:edge-behavior}).

The examples include elementary choices, such as power-law densities, as well as distributions obtained through transformations of Gamma variables. These explicit computations illustrate how diverse families of distributions fall within the scope of the general asymptotic condition used in Theorem~2.2.

\begin{example} Let $\beta>0$ and define, for $u\in(0,1)$,
\[
f_\pi(u)=\frac{(1-u)^{\beta-1}}{\beta}.
\]
Then, $f_\pi$ satisfies condition~(\ref{eq:edge-behavior}), with the explicit choices
\[
\beta\, \text{ as given},\qquad L((1-u)^{-1})\equiv \frac{1}{\beta}.
\]
\end{example}

\begin{example}\label{example2}
Let $X\sim\mathrm{Gamma}(\alpha,\lambda_1)$ and $Y\sim\mathrm{Gamma}(\beta,\lambda_2)$ be independent, with shapes $\alpha,\beta>0$
and (rate) parameters $\lambda_1,\lambda_2>0$. Define
\[
\pi:=\frac{X}{X+Y}\in(0,1).
\]
We shall prove that $\pi$ has density, for $0<u<1$,
\[
f_\pi(u)
=\frac{\Gamma(\alpha+\beta)}{\Gamma(\alpha)\Gamma(\beta)}\,
\frac{\lambda_1^{\alpha}\lambda_2^{\beta}\,u^{\alpha-1}(1-u)^{\beta-1}}
{\big(\lambda_2+(\lambda_1-\lambda_2)u\big)^{\alpha+\beta}}.
\]
%Moreover, $(\pi,V)$ is independent if and only if $\lambda_1=\lambda_2$, in which case
%$\pi\sim\mathrm{Beta}(\alpha,\beta)$ and $V\sim\Gamma(\alpha+\beta,\lambda_1)$.

Therefore,
\[
f_\pi(u)\sim \frac{\Gamma(\alpha+\beta)\,\lambda_2^{\beta}}{\Gamma(\alpha)\Gamma(\beta)\,\lambda_1^{\beta}}\,
(1-u)^{\beta-1}\qquad (u\uparrow 1).
\]
Thus, $f_\pi$ fits condition \eqref{eq:edge-behavior}, with the explicit choices
\[
\beta\, \text{ as given},\qquad
L((1-u)^{-1})\equiv \frac{\Gamma(\alpha+\beta)\,\lambda_2^{\beta}}{\Gamma(\alpha)\Gamma(\beta)\,\lambda_1^{\beta}}.
\]

Moreover, for $\lambda_1=\lambda_2$, this reduces to the case where $\pi\sim\mathrm{Beta}(\alpha,\beta)$. 

\begin{proof}
Let
\[
\pi=\frac{X}{X+Y}\in(0,1),\qquad V=X+Y\in(0,\infty).
\]

By independence,
\[
f_{X,Y}(x,y)=f_X(x)f_Y(y)
=\frac{\lambda_1^{\alpha}}{\Gamma(\alpha)}x^{\alpha-1}e^{-\lambda_1 x}\,
  \frac{\lambda_2^{\beta}}{\Gamma(\beta)}y^{\beta-1}e^{-\lambda_2 y}\,
  \mathbf{1}_{(0,\infty)^2}(x,y).
\]

Consider the bijection $T:(0,\infty)^2\to (0,1)\times(0,\infty)$ defined by
\[
(u,v)=T(x,y):=\Big(\tfrac{x}{x+y},\,x+y\Big),\qquad \text{so that}\qquad
(x,y)=(uv,\,(1-u)v).
\]
The support maps as $(x,y)\in(0,\infty)^2 \Longleftrightarrow (u,v)\in(0,1)\times(0,\infty)$.

The Jacobian determinant of the inverse map $(u,v)\mapsto (x,y)$ equals
\[
J(u,v)=\det
\begin{pmatrix}
\partial x/\partial u & \partial x/\partial v\\[2pt]
\partial y/\partial u & \partial y/\partial v
\end{pmatrix}
=
\det\begin{pmatrix}
v & u\\[2pt]
-\,v & 1-u
\end{pmatrix}=v.
\]

For $u\in(0,1)$ and $v>0$,
\[
\begin{aligned}\label{density}
f_{\pi,V}(u,v)
&= f_{X,Y}(uv,(1-u)v)\; |J(u,v)|\\
&=\frac{\lambda_1^{\alpha}}{\Gamma(\alpha)}(uv)^{\alpha-1}e^{-\lambda_1 uv}\,
  \frac{\lambda_2^{\beta}}{\Gamma(\beta)}\big((1-u)v\big)^{\beta-1}e^{-\lambda_2(1-u)v}
  \,|v|\\
&=\frac{\lambda_1^{\alpha}\lambda_2^{\beta}}{\Gamma(\alpha)\Gamma(\beta)}\;
  u^{\alpha-1}(1-u)^{\beta-1}\; v^{\alpha+\beta-1}\;
  e^{-\{\lambda_2+(\lambda_1-\lambda_2)u\}\,v}.
\end{aligned}
\]

Set
\[
A(u):=\lambda_2+(\lambda_1-\lambda_2)u
\quad\text{so that}\quad A(u)>0 \ \text{for}\ u\in(0,1).
\]

Integrating $f_{\pi,V}(u,v)$ in $v$,
\[
\begin{aligned}
f_\pi(u)
&=\int_0^{\infty} f_{U,V}(u,v)\,dv\\
&=\frac{\lambda_1^{\alpha}\lambda_2^{\beta}}{\Gamma(\alpha)\Gamma(\beta)}\,
  u^{\alpha-1}(1-u)^{\beta-1}
  \int_0^{\infty} v^{\alpha+\beta-1} e^{-A(u)v}\,dv.
\end{aligned}
\]
Using the gamma integral $\int_0^{\infty} v^{k-1}e^{-a v}\,dv=\Gamma(k)a^{-k}$ (valid for $a,k>0$) with $k=\alpha+\beta$ and $a=A(u)$, we obtain
\[
\begin{aligned}
f_\pi(u)
&=\frac{\lambda_1^{\alpha}\lambda_2^{\beta}}{\Gamma(\alpha)\Gamma(\beta)}\,
  u^{\alpha-1}(1-u)^{\beta-1}\;
  \Gamma(\alpha+\beta)\, \big(A(u)\big)^{-(\alpha+\beta)}\\
&=\frac{\Gamma(\alpha+\beta)}{\Gamma(\alpha)\Gamma(\beta)}\,
  \frac{\lambda_1^{\alpha}\lambda_2^{\beta}\, u^{\alpha-1}(1-u)^{\beta-1}}
       {\big(\lambda_2+(\lambda_1-\lambda_2)u\big)^{\alpha+\beta}},
\qquad 0<u<1,
\end{aligned}
\]
which is the claimed density.

%From (\ref{density}),
%\[
%f_{\pi,V}(u,v)
%=\frac{\lambda_1^{\alpha}\lambda_2^{\beta}}{\Gamma(\alpha)\Gamma(\beta)}\,
%  u^{\alpha-1}(1-u)^{\beta-1}\, v^{\alpha+\beta-1}\, e^{-A(u)v}.
%\]
%This factors into a function of $u$ times a function of $v$ \emph{iff} $A(u)$ is constant in $u$, i.e. iff $\lambda_1=\lambda_2$. In that case $A(u)\equiv \lambda_1$ and
%\[
%f_{\pi,V}(u,v)
%=\underbrace{\frac{\Gamma(\alpha+\beta)}{\Gamma(\alpha)\Gamma(\beta)}\,
%u^{\alpha-1}(1-u)^{\beta-1}}_{\mathrm{Beta}(\alpha,\beta)\ \text{in }u}
%\cdot
%\underbrace{\frac{\lambda_1^{\alpha+\beta}}{\Gamma(\alpha+\beta)}\,
%v^{\alpha+\beta-1}e^{-\lambda_1 v}}_{\Gamma(\alpha+\beta,\lambda_1)\ \text{in }v},
%\]
%so $\pi\sim\mathrm{Beta}(\alpha,\beta)$, $V\sim\Gamma(\alpha+\beta,\lambda_1)$, and $\pi\perp V$.

%Writing $c=\lambda_1/\lambda_2$, the density can be expressed as
%\[
%f_\pi(u)=\frac{\Gamma(\alpha+\beta)}{\Gamma(\alpha)\Gamma(\beta)}\,
%\frac{c^{\alpha}\,u^{\alpha-1}(1-u)^{\beta-1}}{(1-u+cu)^{\alpha+\beta}},\qquad 0<u<1.
%\]
%For $c=1$ (equal rates) this reduces to the $\mathrm{Beta}(\alpha,\beta)$ density. 
\end{proof}

\end{example}

\begin{example}
The generalized Beta distribution of the first kind has density
\[
f_\pi(u;a,b,c)=\frac{c}{B(a,b)}\,u^{ac-1}\,(1-u^{c})^{b-1},\qquad 0<u<1,
\]
with $a,b,c>0$. As $u\uparrow1$, $u^{ac-1}\to1$ and $1-u^{c}\sim c(1-u)$, hence
\[
f_\pi(u)\sim \frac{c^{\,b}}{B(a,b)}\,(1-u)^{b-1}.
\]
Therefore, $f_\pi$ satisfies condition~\eqref{eq:edge-behavior}, with
\[
\beta=b,\qquad L((1-u)^{-1})\equiv \frac{c^{\,b}}{B(a,b)}.
\]
\end{example}

\subsection{Inconclusive regime}\label{subs:ex2}

Here we give an example where the conditions of Theorem~\ref{teomain} and Remark~\ref{rem1} are not satisfied, and yet we may find conclusions about survival.

Let
\[
L(x):=(\log x)^{\sin(\log\log x)}
=\exp\!\big((\log\log x)\,\sin(\log\log x)\big), 
\qquad x>1.
\]
By Lemma~\ref{lem:L_osc}, \(L\) is slowly varying at infinity. 
Consider $\pi$ such that
\[
f_{\pi}(u)\sim (1-u)^{-1/2}\,L\!\big((1-u)^{-1}\big)
\qquad\text{as }u\uparrow 1.
\]
Again by Lemma~\ref{lem:L_osc},
\[
\liminf_{x\to\infty} L(x)=0
\qquad\text{and}\qquad
\limsup_{x\to\infty} L(x)=\infty,
\]
so Theorem~\ref{teomain} and Remark~\ref{rem1} are inconclusive in this regime.

From the proof of Theorem~\ref{teomain} and inequalities \eqref{eq:beta-eq0} and \eqref{eq:beta-eq}, we obtain
\begin{equation}
%0<
\sqrt{2}
\ \le\ \liminf_{n\to\infty}\frac{n\,\mathbb P(D^{\rightarrow}\ge n)}{L(n^{2})}
\ \le\ \limsup_{n\to\infty}\frac{n\,\mathbb P(D^{\rightarrow}\ge n)}{L(n^{2})}
\ \le\ 2\ 
%<\infty.
\end{equation}
Moreover, by Lemma~\ref{lem:G_limits},
\[
\liminf_{n\to\infty}
n\,\mathbb P(D^{\rightarrow}\ge n)\,
\log\!\Big(\frac{1}{\mathbb P(D^{\rightarrow}\ge n)}\Big)
\ \ge\ \frac{1}{\sqrt{2}}.
\]

Let \(\text{FW}(N,R)\) denote the firework process with the number of stations given by $N$ and the radii given by $R$ (see \cite{Bertacchi2013}). 
As observed in \cite{CarvalhoMachado2025}, when \(N_z=\eta_z\) and 
\(R_{z,i}=D_{z,i}^{\rightarrow}\), the process \(\text{FW}(N,R)\) is dominated by 
\(\text{FM}(\mathbb{Z},\pi,\eta)\) in the sense that
\[
\mathbb P[\text{FW}(N,R)\ \text{percolates}]
\ \le\ 
\mathbb P[\text{FM}(\mathbb{Z},\pi,\eta)\ \text{survives}].
\]

Assuming \(\mathbb P(\eta>n)\sim c\,n^{-1}\) for some \(c>0\),
Corollary~3.3 (3) of \cite{Bertacchi2013} yields survival whenever \(c>\sqrt{2}\).
  
\subsection{Odds satisfying the conditions}\label{subs:ex3}

Table~\ref{tab:rv-families} compiles examples of densities satisfying the tail condition \eqref{limw}.
Precisely, each entry provides a random variable \(X\) with
\(f_X(t)\sim t^{-(\beta+1)}L_X(t)\) as \(t\to\infty\). Proposition~\ref{proposition:cmp} implies that for \(\pi:=X/(1+X)\),
\[
f_\pi(u)\ \sim (1-u)^{\beta-1}\,L_X\!\Bigl(\frac{1}{1-u}\Bigr)\qquad (u\uparrow 1).
\]
Parameter supports are indicated in the first column; the corresponding values of \(\beta\) and the slowly varying factor \(L_X\) appear in the remaining columns.

\begin{table}[th!]
\centering
\footnotesize
\setlength{\tabcolsep}{4pt}
\renewcommand{\arraystretch}{1.05}
\caption{Examples of densities with regularly varying tails: \(f_X(t)\sim t^{-(\beta+1)}L_X(t)\) as \(t\to\infty\).}
\label{tab:rv-families}
\begin{tabularx}{\linewidth}{@{} l X c l @{}}
\toprule
\textbf{Family (parameters)} & \textbf{Representative pdf \(f_X(t)\)} & \(\boldsymbol{\beta}\) & \(\boldsymbol{L_X(t)}\) \\
\midrule

Pareto I \((x_m>0,\ \alpha>0)\) &
\( \alpha\,x_m^{\alpha}\,t^{-(\alpha+1)}\,\mathbf{1}_{\{t\ge x_m\}} \) &
\(\alpha\) &
\( \alpha\,x_m^{\alpha} \) \\

Lomax (Pareto II) \((\alpha>0,\ \lambda>0)\) &
\( \frac{\alpha}{\lambda}\,(1+t/\lambda)^{-(\alpha+1)} \) &
\(\alpha\) &
\( \alpha\,\lambda^{\alpha} \) \\

Generalized Pareto \((\xi>0,\ \sigma>0)\) &
\( \sigma^{-1}\bigl(1+\xi t/\sigma\bigr)^{-1/\xi - 1} \) &
\( 1/\xi \) &
\( \xi^{-(1/\xi+1)}\,\sigma^{1/\xi} \) \\

Beta prime \((a>0,\ b>0)\) &
\( B(a,b)^{-1}\,t^{a-1}(1+t)^{-(a+b)} \) &
\( b \) &
\( B(a,b)^{-1} \) \\

% ======== F row, sin makecell pero en dos líneas =========
\(F(d_1>0,\ d_2>0)\) &
{\scriptsize
\(
\begin{aligned}[t]
  & B\Bigl(\tfrac{d_1}{2},\tfrac{d_2}{2}\Bigr)^{-1}
    \Bigl(\tfrac{d_1}{d_2}\Bigr)^{d_1/2}\, t^{d_1/2-1} \\
  & \times \bigl(1+\tfrac{d_1}{d_2}t\bigr)^{-(d_1+d_2)/2}
\end{aligned}
\)
} &
\( d_2/2 \) &
\( B\Bigl(\tfrac{d_1}{2},\tfrac{d_2}{2}\Bigr)^{-1}
   \Bigl(\tfrac{d_2}{d_1}\Bigr)^{d_2/2} \) \\
% =========================================================

Burr XII  \((c>0,\ k>0)\) &
\( c\,k\,t^{c-1}(1+t^{c})^{-(k+1)} \) &
\( c k \) &
\( c k \) \\

Inverse--Gamma \((a>0,\ b>0)\) &
\( b^{a}\Gamma(a)^{-1}\,t^{-(a+1)}e^{-b/t} \) &
\( a \) &
\( b^{a}\Gamma(a)^{-1} e^{-b/t} \) \\

Fréchet \((\alpha>0)\) &
\( \alpha\,t^{-(\alpha+1)}e^{-t^{-\alpha}} \) &
\( \alpha \) &
\( \alpha\,e^{-t^{-\alpha}} \) \\

Symmetric \(\mu\)-stable \((\mu\in(0,2))\) &
(no closed form; known tail) &
\( \mu \) &
\( \dfrac{\mu\,\Gamma(\mu)\sin(\pi\mu/2)}{\pi} \) \\

Log--Pareto \((\alpha>0,\ \rho\in\mathbb{R})\) &
\( C_{\alpha,\rho}\,t^{-(\alpha+1)}(\log t)^{\rho}\,\mathbf{1}_{\{t\ge e\}} \) &
\( \alpha \) &
\( C_{\alpha,\rho} (\log t)^{\rho} \) \\
\bottomrule
\end{tabularx}

\vspace{0.3em}
\emph{Notes.} The indicator \(\mathbf{1}_{\{\cdot\}}\) is shown only when the support differs from \(t>0\) (Pareto I, Log--Pareto).
\(B(\cdot,\cdot)\) is the Beta function; \(\Gamma(\cdot)\) the Gamma function; \(\Gamma(s,x)\) the upper incomplete gamma function.
For Log--Pareto, \( C_{\alpha,\rho} = \alpha^{\rho+1}/\Gamma(\rho+1,\alpha) \).
\end{table}

\medskip
\noindent

\section{Auxiliary results}
\label{appendix}

Appendix~\ref{appendix} collects several analytic estimates used throughout the proofs 
in Section~3. These include Taylor-type expansions, square-root and 
logarithmic approximations, and bounds for slowly varying functions such 
as Potter estimates and uniform convergence results.

\begin{lemma}\label{lemma_aproxt} (i)
Let $\gamma\in\mathbb{R}$ and define $\varphi(x)=(1+x)^{\gamma}$ for $x>-1$. 
Fix any $\delta\in(0,1)$. Then, there exists a constant $C=C(\gamma,\delta)>0$ such that
\[
\bigl|(1+x)^{\gamma}-\bigl(1+\gamma x\bigr)\bigr|\ \le\ C\,x^{2}\qquad\text{for all }|x|\le \delta.
\]
Equivalently,
\[
(1+x)^{\gamma}\ =\ 1+\gamma x + \mathcal{O}(x^{2})\qquad (x\to 0).
\]

If $\theta=\theta(s)\to 0$ as $s\to 0$, then
\[
(1+\theta(s))^{\gamma}=1+\gamma\,\theta(s)+\mathcal{O}\!\big(\theta(s)^{2}\big).
\]
In particular, if $\theta(s)=\mathcal{O}(s^{k})$ for some $k>0$, then
\[
(1+\theta(s))^{\gamma}=1+\gamma\,\theta(s)+\mathcal{O}(s^{2k}).
\]

(ii) There exist $\delta\in(0,1)$ and $C>0$ such that, for all $s$ with $|s|\le \delta$,
\[
\Bigl|\,\bigl(1-\sqrt{1-s^2}\bigr)-\tfrac12\,s^2\,\Bigr|\ \le\ C\,s^4.
\]
Equivalently,
\[
1-\sqrt{1-s^2}\ =\ \tfrac12\,s^2+\mathcal O(s^4)\qquad (s\to 0).
\]

(iii) Let \(r(s):=\dfrac{1-s}{\sqrt{1-s^2}}\) for \(s\in(-1,1)\). 
There exist \(\delta\in(0,1)\) and \(K>0\) such that, for all \(|s|\le \delta\),
\[
\bigl|\log r(s)+s\bigr|\ \le\ K\,|s|^{3}.
\]
Equivalently, \(\log r(s)=-s+\mathcal O(s^{3})\) as \(s\to 0\).
\end{lemma}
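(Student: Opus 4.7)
The proof reduces to three standard Taylor expansions with quantitative remainders. The plan is to establish (i) first by a direct application of Taylor's theorem with Lagrange remainder, and then to derive (ii) and (iii) as corollaries via elementary substitutions and one algebraic identity.

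For (i), I would apply Taylor's theorem of order two to $\varphi(x)=(1+x)^{\gamma}$ on $|x|\le\delta<1$. Since $\varphi(0)=1$, $\varphi'(0)=\gamma$, and $\varphi''(x)=\gamma(\gamma-1)(1+x)^{\gamma-2}$, the Lagrange form yields $(1+x)^{\gamma}=1+\gamma x+\tfrac{1}{2}\gamma(\gamma-1)(1+\xi)^{\gamma-2}x^{2}$ for some $\xi$ between $0$ and $x$. On $|\xi|\le\delta$, the factor $(1+\xi)^{\gamma-2}$ is bounded by $M(\gamma,\delta):=\max\{(1-\delta)^{\gamma-2},(1+\delta)^{\gamma-2}\}$, giving the claimed estimate with $C=\tfrac{1}{2}|\gamma(\gamma-1)|\,M(\gamma,\delta)$. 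The composite statement for $\theta(s)\to 0$ follows by substituting $x=\theta(s)$ once $|\theta(s)|\le\delta$, and the sharper rate $\mathcal{O}(s^{2k})$ is immediate from $\theta(s)^{2}=\mathcal{O}(s^{2k})$.

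Part (ii) is then an immediate corollary of (i): apply the composition form with $\gamma=1/2$ and $\theta(s)=-s^{2}$, which satisfies $\theta(s)=\mathcal{O}(s^{2})$, to obtain $\sqrt{1-s^{2}}=1-\tfrac{1}{2}s^{2}+\mathcal{O}(s^{4})$, whence $1-\sqrt{1-s^{2}}=\tfrac{1}{2}s^{2}+\mathcal{O}(s^{4})$. For (iii), the key step is the algebraic identity $r(s)=(1-s)/\sqrt{(1-s)(1+s)}=\sqrt{(1-s)/(1+s)}$ valid on $(-1,1)$, so that $\log r(s)=\tfrac{1}{2}\bigl[\log(1-s)-\log(1+s)\bigr]$. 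Using Taylor's theorem for $\log(1\pm s)$ with Lagrange remainder on $|s|\le\delta<1$, the even-order terms cancel in the difference, leaving $\log(1-s)-\log(1+s)=-2s-\tfrac{2}{3}s^{3}+\mathcal{O}(s^{5})$, and halving produces $\log r(s)=-s-\tfrac{1}{3}s^{3}+\mathcal{O}(s^{5})=-s+\mathcal{O}(s^{3})$, with the constant $K$ extracted from the uniform Lagrange remainder bound on $|s|\le\delta$. The entire lemma amounts to careful bookkeeping of Taylor remainders on a fixed compact neighborhood of the origin; there is no substantive obstacle beyond choosing $\delta$ so that all expansions are valid simultaneously.
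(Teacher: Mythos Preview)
Your proof is correct and, for parts (i) and (ii), essentially identical to the paper's: both apply Taylor's theorem with Lagrange remainder to $(1+x)^{\gamma}$ and bound the second-derivative factor on $[-\delta,\delta]$, with (ii) obtained by substituting $x=-s^{2}$ (the paper does the direct expansion of $\sqrt{1-x}$ and then sets $x=s^{2}$, which amounts to the same thing).

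For (iii) you take a slightly different and cleaner decomposition. The paper writes $\log r(s)=\log(1-s)-\tfrac12\log(1-s^{2})$, expands each piece to third order, and observes that the $s^{2}$ terms cancel upon addition. Your identity $r(s)=\sqrt{(1-s)/(1+s)}$ gives $\log r(s)=\tfrac12\bigl[\log(1-s)-\log(1+s)\bigr]$ directly, which is an odd function of $s$; the even-order terms therefore cancel by symmetry and the $\mathcal{O}(s^{3})$ remainder is immediate without tracking the quadratic pieces separately. The two decompositions are algebraically equivalent (since $\log(1-s^{2})=\log(1-s)+\log(1+s)$), but yours makes the parity structure explicit and shortens the bookkeeping.
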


\begin{proof} Proof of (i): Let  $x\in [-\delta,\delta]$ we have $1+x\ge 1-\delta>0$, hence $\varphi$ is $C^{2}$ there with
\[
\varphi'(x)=\gamma(1+x)^{\gamma-1},\qquad 
\varphi''(x)=\gamma(\gamma-1)(1+x)^{\gamma-2}.
\]
By Taylor’s theorem with Lagrange remainder at $0$,
\[
(1+x)^{\gamma}=\varphi(0)+\varphi'(0)\,x+\frac{\varphi''(\xi_x)}{2}\,x^{2}
=1+\gamma x+\frac{\gamma(\gamma-1)}{2}\,(1+\xi_x)^{\gamma-2}\,x^{2},
\]
for some $\xi_x$ between $0$ and $x$. Since $(1+\xi)^{\gamma-2}$ is continuous on $[-\delta,\delta]$,
\[
M:=\sup_{|\xi|\le \delta}\bigl|(1+\xi)^{\gamma-2}\bigr|<\infty,
\]
and hence
\[
\bigl|(1+x)^{\gamma}-(1+\gamma x)\bigr|
\le \frac{|\gamma(\gamma-1)|}{2}M\,x^{2}
=:C(\gamma,\delta)\,x^{2}.
\]
Proof of (ii): Consider $\varphi(x):=\sqrt{1-x}$ on $x\in[0,1)$. Then
\[
\varphi(0)=1,\qquad \varphi'(x)=-\tfrac12(1-x)^{-1/2},\qquad 
\varphi''(x)=-\tfrac14(1-x)^{-3/2}.
\]
By Taylor's theorem with Lagrange remainder at $0$, for any $x\in[0,\delta^2]$ we have
\[
\varphi(x)=\varphi(0)+\varphi'(0)\,x+\frac{\varphi''(\xi_x)}{2}\,x^2
= 1-\tfrac12 x+\frac{-\tfrac14(1-\xi_x)^{-3/2}}{2}\,x^2
= 1-\tfrac12 x-\tfrac18(1-\xi_x)^{-3/2}x^2,
\]
for some $\xi_x\in(0,x)$. Rearranging,
\[
1-\sqrt{1-x}=\tfrac12\,x+\tfrac18(1-\xi_x)^{-3/2}\,x^2.
\]
Fix $\delta\in(0,1)$; then for all $x\in[0,\delta^2]$ we have
$(1-\xi_x)^{-3/2}\le (1-\delta^2)^{-3/2}$. Hence,
\[
\Bigl|\,\bigl(1-\sqrt{1-x}\bigr)-\tfrac12 x\,\Bigr|
\ \le\ \frac{1}{8}(1-\delta^2)^{-3/2}\,x^2
\ =: C_0\,x^2.
\]
Finally, set $x=s^2$ to obtain, for $|s|\le \delta$,
\[
\Bigl|\,\bigl(1-\sqrt{1-s^2}\bigr)-\tfrac12\,s^2\,\Bigr|\ \le\ C_0\,s^4,
\]
which proves the claim with $C=C_0$.

Proof of (iii): Write \(\log r(s)=\phi(s)+\psi(s)\) with
\[
\phi(s):=\log(1-s),\qquad \psi(s):=-\tfrac12\log(1-s^{2}).
\]

On \([-\delta,\delta]\subset(-1,1)\), \(\phi\) is \(C^{3}\) with
\[
\phi(0)=0,\quad \phi'(s)=-(1-s)^{-1},\quad \phi''(s)=-(1-s)^{-2},\quad \phi'''(s)=-2(1-s)^{-3}.
\]
Taylor’s theorem with Lagrange remainder at \(0\) yields
\[
\phi(s)=\phi(0)+\phi'(0)s+\tfrac12\phi''(0)s^{2}+\tfrac16\phi'''(\xi_s)s^{3}
=-s-\tfrac12 s^{2}+R_\phi(s),
\]
for some \(\xi_s\) between \(0\) and \(s\), with
\[
|R_\phi(s)|=\frac{|\phi'''(\xi_s)|}{6}\,|s|^{3}
\le \frac{2}{6}(1-\delta)^{-3}\,|s|^{3}
=:C_1\,|s|^{3}.
\]

Define \(\chi(x):=-\tfrac12\log(1-x)\) on \([0,\delta^{2}]\). Then, \(\chi\) is \(C^{3}\) with
\[
\chi(0)=0,\quad \chi'(x)=\tfrac12(1-x)^{-1},\quad \chi''(x)=\tfrac12(1-x)^{-2},\quad 
\chi'''(x)=(1-x)^{-3}.
\]
Taylor at \(0\) gives
\[
\chi(x)=\tfrac12 x+\tfrac14 x^{2}+R_\chi(x),\qquad 
|R_\chi(x)|\le \frac{\sup_{[0,\delta^{2}]}\chi''' }{6}\,x^{3}
\le \frac{(1-\delta^{2})^{-3}}{6}\,x^{3}.
\]
Now take \(x=s^{2}\). Then
\[
\psi(s)=\chi(s^{2})=\tfrac12 s^{2}+\tfrac14 s^{4}+R_\chi(s^{2}),\qquad 
|R_\chi(s^{2})|\le C_2\,|s|^{6},\quad C_2:=\tfrac16(1-\delta^{2})^{-3}.
\]

Summing \(\phi\) and \(\psi\),
\[
\log r(s)=\bigl(-s-\tfrac12 s^{2}+R_\phi(s)\bigr)+\bigl(\tfrac12 s^{2}+\tfrac14 s^{4}+R_\chi(s^{2})\bigr)
=-s+\bigl(R_\phi(s)+R_\chi(s^{2})+\tfrac14 s^{4}\bigr).
\]
Hence
\[
\bigl|\log r(s)+s\bigr|\ \le\ |R_\phi(s)|+|R_\chi(s^{2})|+\tfrac14|s|^{4}
\ \le\ C_1|s|^{3} + C_2|s|^{6} + \tfrac14|s|^{4}
\ \le\ K\,|s|^{3},
\]
for all \(|s|\le \delta\), with \(K:=C_1+\tfrac14\delta+ C_2\delta^{3}\).
\end{proof}

\begin{lemma}\label{L_equiv}
Let $L$ be slowly varying at $\infty$. Then, as $s\downarrow 0$,
\[
\frac{L\!\Big(\dfrac{1}{\,1-\sqrt{\,1-s^{2}\,}}\Big)}{L\!\Big(\dfrac{1}{s^{2}}\Big)}\ \longrightarrow\ 1.
\]
\end{lemma}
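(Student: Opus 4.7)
The plan is to reduce the claim to an application of the Uniform Convergence Theorem for slowly varying functions, using the Taylor estimate already established in Lemma~\ref{lemma_aproxt}\,(ii).

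First, I would rewrite the argument of the numerator. By Lemma~\ref{lemma_aproxt}\,(ii), as $s\to 0$,
\[
1-\sqrt{1-s^{2}}\ =\ \tfrac12 s^{2}\bigl(1+\mathcal O(s^{2})\bigr),
\]
hence, for $s$ sufficiently small,
\[
\frac{1}{1-\sqrt{1-s^{2}}}\ =\ \frac{2}{s^{2}}\,\bigl(1+\mathcal O(s^{2})\bigr)\ =\ \lambda(s)\cdot\frac{1}{s^{2}},
\]
where $\lambda(s):=\dfrac{2}{1+\mathcal O(s^{2})}\longrightarrow 2$ as $s\downarrow 0$. Setting $x:=1/s^{2}$, the ratio to analyze becomes
\[
\frac{L\!\bigl(\lambda(s)\,x\bigr)}{L(x)}, \qquad x\to\infty,\ \lambda(s)\to 2.
\]

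Second, I would invoke the Uniform Convergence Theorem for slowly varying functions (see \cite[Thm.~1.2.1]{BGT1987}), which states that if $L$ is slowly varying at $\infty$, then $L(\lambda x)/L(x)\to 1$ uniformly for $\lambda$ in any compact subset of $(0,\infty)$. Since $\lambda(s)\to 2$, the values $\lambda(s)$ eventually lie in a fixed compact interval such as $[1,3]\subset(0,\infty)$. Combining the uniform convergence on $[1,3]$ with $x=1/s^{2}\to\infty$ yields
\[
\left|\frac{L(\lambda(s)\,x)}{L(x)}-1\right|\ \le\ \sup_{\lambda\in[1,3]}\left|\frac{L(\lambda x)}{L(x)}-1\right|\ \longrightarrow\ 0,
\]
which is the desired conclusion.

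The only subtlety is ensuring that uniform convergence (not merely pointwise convergence $L(\lambda x)/L(x)\to 1$ for each fixed $\lambda$) is required here, since $\lambda(s)$ varies with $s$. This is precisely what the Uniform Convergence Theorem provides, so the argument is short and the main (mild) obstacle is simply invoking the correct form of that theorem; no delicate estimation of $L$ itself is needed.
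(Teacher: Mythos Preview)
Your proof is correct and follows essentially the same route as the paper: write $\dfrac{1}{1-\sqrt{1-s^2}}=\lambda(s)\cdot\dfrac{1}{s^2}$ with $\lambda(s)$ confined to a compact subset of $(0,\infty)$, then apply the Uniform Convergence Theorem from \cite[Thm.~1.2.1]{BGT1987}. The only cosmetic difference is that the paper obtains $\lambda_s=1+\sqrt{1-s^2}\in(1,2)$ via the exact identity $1-\sqrt{1-s^2}=s^2/(1+\sqrt{1-s^2})$ rather than through the Taylor estimate of Lemma~\ref{lemma_aproxt}\,(ii), so it uses $K=[1,2]$ instead of your $K=[1,3]$.
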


\begin{proof}
For $s\in(0,1)$,
\[
1-\sqrt{1-s^{2}}
=\frac{(1-\sqrt{1-s^{2}})(1+\sqrt{1-s^{2}})}{1+\sqrt{1-s^{2}}}
=\frac{s^{2}}{\,1+\sqrt{1-s^{2}}\,}.
\]
Hence
\[
\frac{1}{1-\sqrt{1-s^{2}}}
=\frac{1+\sqrt{1-s^{2}}}{s^{2}}
=\lambda_s\,x_s,
\qquad
x_s:=\frac{1}{s^{2}}\xrightarrow[s\downarrow 0]{}\infty,\quad
\lambda_s:=1+\sqrt{1-s^{2}}\in(1,2).
\]

Since $L$ is slowly varying at $\infty$, the Uniform Convergence Theorem
(see, e.g., \cite[Thm.~1.2.1]{BGT1987}) yields that for every compact
$K\subset(0,\infty)$,
\[
\sup_{\lambda\in K}\Bigg|\frac{L(\lambda x)}{L(x)}-1\Bigg|\xrightarrow[x\to\infty]{}0.
\]
Apply this with $K=[1,2]$, $x=x_s$ and $\lambda=\lambda_s$ to obtain:
for every $\varepsilon>0$ there exists $\delta_\varepsilon\in(0,1)$ such that for all
$s\in(0,\delta_\varepsilon)$,
\[
\left|\frac{L(\lambda_s x_s)}{L(x_s)}-1\right|
=\left|\frac{L\!\big(\tfrac{1}{\,1-\sqrt{1-s^{2}}}\big)}{L\!\big(\tfrac{1}{s^{2}}\big)}-1\right|
<\varepsilon.
\]
This proves the claim. \qedhere

\end{proof}

\begin{remark}
 Lemma~\ref{L_equiv}. Fix $\varepsilon\in(0,1)$. Then, there exists
$\delta_\varepsilon\in(0,1)$ such that for all $s\in(0,\delta_\varepsilon)$,
\begin{equation}\label{epsilon_L_2}
(1-\varepsilon)\,L\!\Big(\frac{1}{s^{2}}\Big)
\;\le\;
L\!\Big(\frac{1}{\,1-\sqrt{\,1-s^{2}\,}}\Big)
\;\le\;
(1+\varepsilon)\,L\!\Big(\frac{1}{s^{2}}\Big).
\end{equation}

\end{remark}
\begin{lemma}\label{lem:Potter}
Let $L$ be slowly varying at $\infty$. Then, for every $\varepsilon>0$ and every $A>1$ there exist
$X=X(A,\varepsilon)\ge 1$ and $n_0=n_0(A,\varepsilon)\in\mathbb{N}$ such that, for all $n\ge n_0$ and all $t>0$ with
\[
n^{2}\ge X
\quad\text{and}\quad
\frac{n^{2}}{t^{2}}\ge X
\ \ \big(\text{equivalently }0<t\le n/\sqrt{X}\big),
\]
\begin{equation}\label{eq:Potter-global}
A^{-1}\,\min\{t^{2\varepsilon},\,t^{-2\varepsilon}\}\,L(n^{2})
\;\le\;
L\!\Big(\frac{n^{2}}{t^{2}}\Big)
\;\le\;
A\,\max\{t^{2\varepsilon},\,t^{-2\varepsilon}\}\,L(n^{2}).
\end{equation}
In particular, if $t\in(0,1]$, one obtains the one–sided form
\begin{equation}\label{eq:Potter-I}
A^{-1}\,t^{2\varepsilon}\,L(n^{2})
\;\le\;
L\!\Big(\frac{n^{2}}{t^{2}}\Big)
\;\le\;
A\,t^{-2\varepsilon}\,L(n^{2})
\qquad (n\ \text{sufficiently large}).
\end{equation}
Moreover, for every compact interval $0<a\le t\le b<\infty$,
\begin{equation}\label{eq:UCT-compact}
\sup_{t\in[a,b]}\Bigg|\frac{L\!\big(n^{2}/t^{2}\big)}{L(n^{2})}-1\Bigg|\xrightarrow[n\to\infty]{}0,
\end{equation}
that is, the convergence is uniform on compact sets of \(t\).
\end{lemma}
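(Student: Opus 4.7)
The plan is to reduce every assertion to the classical Potter's theorem and the Uniform Convergence Theorem for slowly varying functions, as stated in \cite[Thm.~1.5.6 and Thm.~1.2.1]{BGT1987}. The key observation is that, under the substitution $x=n^{2}$ and $y=n^{2}/t^{2}$, one has $y/x=1/t^{2}$, so the ratio $L(n^{2}/t^{2})/L(n^{2})$ is merely a quotient of the form $L(\lambda x)/L(x)$ with $\lambda=1/t^{2}$ and $x\to\infty$. The three displayed inequalities then correspond respectively to the global Potter bound, its specialization to $t\in(0,1]$, and the UCT on a compact subset.

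For \eqref{eq:Potter-global}, I would quote Potter's theorem in the symmetric form: given $A>1$ and $\varepsilon>0$, there exists $X=X(A,\varepsilon)\ge 1$ such that
\[
A^{-1}\min\!\bigl\{(y/x)^{\varepsilon},(y/x)^{-\varepsilon}\bigr\}\ \le\ \frac{L(y)}{L(x)}\ \le\ A\,\max\!\bigl\{(y/x)^{\varepsilon},(y/x)^{-\varepsilon}\bigr\}
\]
for all $x,y\ge X$. Choosing $n_{0}:=\lceil\sqrt{X}\rceil$ secures $n^{2}\ge X$ whenever $n\ge n_{0}$, while the stated range $0<t\le n/\sqrt{X}$ is exactly equivalent to $n^{2}/t^{2}\ge X$. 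Substituting $x=n^{2}$, $y=n^{2}/t^{2}$ (so that $(y/x)^{\varepsilon}=t^{-2\varepsilon}$ and $(y/x)^{-\varepsilon}=t^{2\varepsilon}$) yields \eqref{eq:Potter-global} verbatim.

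For \eqref{eq:Potter-I} I would simply specialize the previous step: when $t\in(0,1]$ we have $t^{2\varepsilon}\le 1\le t^{-2\varepsilon}$, so the minimum equals $t^{2\varepsilon}$ and the maximum equals $t^{-2\varepsilon}$, which collapses the two-sided estimate to the announced one-sided form. For \eqref{eq:UCT-compact}, I would apply the UCT for slowly varying functions, which asserts that $\sup_{\lambda\in K}|L(\lambda x)/L(x)-1|\to 0$ as $x\to\infty$ on every compact $K\subset(0,\infty)$. Choosing $K=[a^{-2},b^{-2}]$ and $x=n^{2}\to\infty$, and noting that $t\mapsto 1/t^{2}$ is a homeomorphism of $[a,b]$ onto $K$, the supremum over $t\in[a,b]$ coincides with the supremum over $\lambda\in K$, giving the claim.

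No genuine obstacle is expected here; the lemma is essentially a translation of well-known Potter and UCT statements into the $(n,t)$ variables used in Section~3. The only point requiring minimal care is the simultaneous requirement $n^{2}\ge X$ and $n^{2}/t^{2}\ge X$, but both are already encoded in the hypotheses $n\ge n_{0}$ and $t\le n/\sqrt{X}$, so the argument is a direct verification once Potter's bound and the UCT are in hand.
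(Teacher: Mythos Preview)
Your proposal is correct and follows essentially the same route as the paper: both invoke Potter's bound \cite[Thm.~1.5.6]{BGT1987} with $x=n^{2}$, $y=n^{2}/t^{2}$, specialize to $t\in(0,1]$ for the one-sided form, and appeal to the Uniform Convergence Theorem \cite[Thm.~1.2.1]{BGT1987} for the compact statement. The only slip is that your compact set should be written $K=[b^{-2},a^{-2}]$ rather than $[a^{-2},b^{-2}]$ (since $a\le b$ implies $b^{-2}\le a^{-2}$), but this is purely notational and does not affect the argument.
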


\begin{proof}
By Potter’s bound for slowly varying functions (regularly varying of index \(0\)),
for every \(\varepsilon>0\) and every \(A>1\) there exists \(X=X(A,\varepsilon)\ge 1\) such that, for all
\(x,y\ge X\),
\begin{equation}\label{eq:Potter-master}
A^{-1}\,\min\!\Big\{\Big(\frac{y}{x}\Big)^{\varepsilon},\Big(\frac{y}{x}\Big)^{-\varepsilon}\Big\}
\;\le\;
\frac{L(y)}{L(x)}
\;\le\;
A\,\max\!\Big\{\Big(\frac{y}{x}\Big)^{\varepsilon},\Big(\frac{y}{x}\Big)^{-\varepsilon}\Big\}.
\end{equation}
(See \cite[Thm.~1.5.6]{BGT1987}).

Apply \eqref{eq:Potter-master} with \(x=n^{2}\) and \(y=n^{2}/t^{2}\). The stated size
conditions ensure \(x,y\ge X(A,\varepsilon)\), and we obtain \eqref{eq:Potter-global}. 
If, in addition, \(t\in(0,1]\), then \(\min\{t^{2\varepsilon},t^{-2\varepsilon}\}=t^{2\varepsilon}\) and
\(\max\{t^{2\varepsilon},t^{-2\varepsilon}\}=t^{-2\varepsilon}\), which yields \eqref{eq:Potter-I} once \(n\) is large
enough so that \(n^{2}\ge X(A,\varepsilon)\).

Finally, \eqref{eq:UCT-compact} follows from the Uniform Convergence Theorem for slowly varying functions (see, e.g., \cite[Thm.~1.2.1]{BGT1987}). This completes the proof.
\end{proof}

% ---------- Lemma 1 ----------
\begin{lemma}\label{lem:L_osc}
Let
\[
L(x):=(\log x)^{\sin(\log\log x)}=\exp\!\big((\log\log x)\,\sin(\log\log x)\big),\qquad x>e.
\]
Then, \(L\) is a slowly varying function with
\[
\liminf_{x\to\infty} L(x)=0
\qquad\text{and}\qquad
\limsup_{x\to\infty} L(x)=\infty.
\]
\end{lemma}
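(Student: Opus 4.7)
The plan is to substitute $u=u(x):=\log\log x$ so that $L(x)=\exp(u\sin u)$, and to split the argument into two independent tasks: proving slow variation, and then exhibiting subsequences along which $L\to 0$ and $L\to\infty$.

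For slow variation, I would fix $\lambda>0$ and compare $u=\log\log x$ with $v:=\log\log(\lambda x)$. Since $\log(\lambda x)=\log x+\log\lambda$, a first-order estimate gives $v=u+\epsilon(x)$ with $\epsilon(x)=\log(1+\log\lambda/\log x)=O(1/\log x)$ as $x\to\infty$. Because $L(\lambda x)/L(x)=\exp(v\sin v-u\sin u)$, the task reduces to proving $v\sin v-u\sin u\to 0$. I would expand
\[
v\sin v-u\sin u=u\bigl(\sin(u+\epsilon)-\sin u\bigr)+\epsilon\,\sin(u+\epsilon),
\]
and use the Lipschitz bound $|\sin(u+\epsilon)-\sin u|\le|\epsilon|$ (or the sum-to-product identity). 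The first term is then bounded by $u|\epsilon|=O(\log\log x/\log x)\to 0$, and the second by $|\epsilon|\to 0$, so the ratio tends to $1$.

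For the liminf and limsup, the key point is that $u=\log\log x$ is continuous and unbounded, so one can pick sequences $x_n,y_n\to\infty$ with $u(x_n)=2\pi n+\pi/2$ and $u(y_n)=2\pi n-\pi/2$, explicitly $x_n=\exp\exp(2\pi n+\pi/2)$ and $y_n=\exp\exp(2\pi n-\pi/2)$. Along $\{x_n\}$ one has $\sin u(x_n)=1$, hence $L(x_n)=\exp(u(x_n))\to\infty$; along $\{y_n\}$ one has $\sin u(y_n)=-1$, hence $L(y_n)=\exp(-u(y_n))\to 0$. Combined, this gives $\limsup_{x\to\infty}L(x)=\infty$ and $\liminf_{x\to\infty}L(x)=0$.

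The main obstacle is the slow-variation step, where genuine cancellation inside $v\sin v-u\sin u$ must be exploited: a naive triangle-inequality bound by $|v\sin v|+|u\sin u|$ fails to decay, since each summand is unbounded. What makes the estimate work is precisely the mismatch between the growth of $u=\log\log x$ and the smallness of the perturbation $\epsilon=O(1/\log x)$, the product $u\epsilon\asymp\log\log x/\log x$ tending to $0$. Once this quantitative comparison is in place, the rest of the argument is a routine application of continuity and the Lipschitz property of sine.
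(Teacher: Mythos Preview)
Your proposal is correct and follows essentially the same route as the paper: you substitute $u=\log\log x$, bound $v\sin v-u\sin u$ by a quantity of order $u\,\epsilon\asymp(\log\log x)/\log x$ (the paper does this via the mean value theorem on $g(u)=u\sin u$, you via the direct decomposition and the Lipschitz bound for $\sin$, which amounts to the same estimate), and then pick the obvious subsequences where $\sin u=\pm1$.
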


\begin{proof}

Fix $\lambda>0$ and set $u(x):=\log\log x$ and
\[
\delta(x):=u(\lambda x)-u(x)
=\log\!\left(\frac{\log(\lambda x)}{\log x}\right)
=\log\!\left(1+\frac{\log\lambda}{\log x}\right).
\]
Since $\frac{\log\lambda}{\log x}\to 0$ as $x\to\infty$, there exists $x_0$ such that for $x\ge x_0$,
\[
|\delta(x)|\le \frac{2|\log\lambda|}{\log x}\leq 1.
\]
Write $\log L(x)=g(u(x))$ with $g(u):=u\sin u$. By the mean value theorem,
\[
\big|g(u(x)+\delta(x))-g(u(x))\big|
=|g'(\xi_x)|\,|\delta(x)|,
\]
for some $\xi_x$ between $u(x)$ and $u(x)+\delta(x)$, where
\[
g'(u)=\sin u + u\cos u,
\qquad\text{so}\qquad
|g'(\xi_x)|\le 1+\xi_x\le 2+u(x)\quad\text{for $x\ge x_0$}.
\]
Hence, for $x\ge x_0$,
\[
\left|\log\frac{L(\lambda x)}{L(x)}\right|
=\big|g(u(\lambda x))-g(u(x))\big|
\le \big(2+\log\log x\big)\,\frac{2|\log\lambda|}{\log x}
\xrightarrow[x\to\infty]{} 0.
\]
Therefore, $\frac{L(\lambda x)}{L(x)}\to 1$, i.e., $L$ is slowly varying at infinity.

For $k\in\mathbb N$ set
\[
t_k:=\frac{\pi}{2}+2\pi k,\quad s_k:=\frac{3\pi}{2}+2\pi k,\quad
x_k:=\exp(\exp t_k),\quad y_k:=\exp(\exp s_k).
\]
Then, $u(x_k)=t_k$ and $u(y_k)=s_k$, hence
\[
L(x_k)=\exp\!\big(t_k\sin t_k\big)=\exp(t_k)\xrightarrow[k\to\infty]{}\infty,
\quad
L(y_k)=\exp\!\big(s_k\sin s_k\big)=\exp(-s_k)\xrightarrow[k\to\infty]{}0,
\]
which proves the claim.
\end{proof}

% ---------- Lemma 2 ----------
\begin{lemma}\label{lem:G_limits}
Let 
\[
G_n\ :=\ n\,\mathbb P(D^{\rightarrow}\ge n)\,\log\!\Big(\frac{1}{\mathbb P(D^{\rightarrow}\ge n)}\Big),\qquad n\in\mathbb N.
\]
Assume there exist $0<A_- \le A_+<\infty$ such that
\begin{equation}\label{eq:scale_comp}
0<A_-\ \le\ \liminf_{n\to\infty}\frac{n\,\mathbb P(D^{\rightarrow}\ge n)}{L(n^2)}
\ \le\ A_+\ <\infty,
\end{equation}
where $L(x)$ is as defined in Lemma~\ref{lem:L_osc}. Then:
\begin{align*}
\frac{A_-}{2}\ \le\ \liminf_{n\to\infty} G_n.
\end{align*}
% Moreover, if $\displaystyle \frac{n\,\mathbb P(D^{*}\ge n)}{L(n^2)}\to A\in[A_-,A_+]$, then
% \[
% \liminf_{n\to\infty} G_n=\frac{A}{2}
% \qquad\text{and}\qquad
% \limsup_{n\to\infty} G_n=+\infty.
% \]
\end{lemma}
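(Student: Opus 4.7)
My plan is to rewrite $G_n$ in the variables $a_n := n\,\mathbb P(D^{\rightarrow}\ge n)/L(n^2)$ and $b_n := n\,\mathbb P(D^{\rightarrow}\ge n) = a_n L(n^2)$, so that $p_n=b_n/n$ and $G_n = b_n \log(n/b_n)$. The engine of the proof is the elementary pointwise estimate
\[
L(n^2)\log n \;=\; \frac{(2\log n)^{\,1+\sin\log(2\log n)}}{2} \;\ge\; \frac12 \qquad (n\ge \sqrt{e}),
\]
which follows by setting $w:=2\log n\ge 1$ and noting that $1+\sin(\cdot)\ge 0$ forces $w^{1+\sin\log w}\ge 1$. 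This is the single analytic input responsible for the factor $1/2$ in the conclusion.

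Next, I fix $\varepsilon\in(0,1/2)$ and $\eta\in(0,A_-)$ and split according to the size of $b_n$. Since $D^{\rightarrow}$ is almost surely finite, $p_n=b_n/n\to 0$; in particular $b_n<n/e$ and $\log(n/b_n)\ge 1$ eventually. In the regime $b_n\le n^{\varepsilon}$ one has $\log(1/p_n)=\log n-\log b_n\ge (1-\varepsilon)\log n$ (automatic if $b_n\le 1$, and because $\log b_n\le\varepsilon\log n$ otherwise), so
\[
G_n \;=\; b_n\log(1/p_n) \;\ge\; (1-\varepsilon)\,b_n\log n \;=\; (1-\varepsilon)\,a_n\bigl(L(n^2)\log n\bigr) \;\ge\; \frac{(1-\varepsilon)(A_- - \eta)}{2}
\]
for all large $n$, using $\liminf a_n\ge A_-$ and the key inequality. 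In the complementary regime $b_n>n^{\varepsilon}$, the bound $\log(n/b_n)\ge 1$ yields $G_n\ge b_n>n^{\varepsilon}\to\infty$, dwarfing any fixed constant.

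Combining the two cases, $G_n\ge (1-\varepsilon)(A_- - \eta)/2$ eventually, hence $\liminf_{n\to\infty}G_n\ge (1-\varepsilon)(A_- - \eta)/2$; letting $\varepsilon,\eta\downarrow 0$ gives $\liminf G_n\ge A_-/2$. The only genuinely delicate step is establishing the bound $L(n^2)\log n\ge 1/2$; once the explicit algebraic form of $L$ from Lemma~\ref{lem:L_osc} is exploited, the rest is a routine case analysis. Note that the upper-bound hypothesis $\liminf a_n\le A_+$ is not used anywhere: only $\liminf a_n\ge A_-$ enters the argument, which is consistent with the conclusion depending only on $A_-$.
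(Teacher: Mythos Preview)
Your argument is correct and takes a more elementary route than the paper's. Both proofs ultimately rest on the same analytic fact—setting $w=2\log n\ge 1$, one has $L(n^2)\log n=\tfrac12\,w^{1+\sin\log w}\ge \tfrac12$—but you isolate this pointwise inequality explicitly and pair it with a dichotomy on whether $b_n:=n\,\mathbb P(D^{\rightarrow}\ge n)$ exceeds $n^{\varepsilon}$. The paper instead parametrizes by $t_n=\sin u_n$ (with $u_n=\log(2\log n)$), writes $G_n=a_n\,h_n(t_n)$ for $h_n(t)=e^{tu_n}(\log n - tu_n-\log a_n)$, shows that $h_n$ is increasing on $[-1,1]$ for large $n$, and evaluates at $t=-1$. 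That route needs a uniform bound on $|\log a_n|$, which the paper obtains from ``$a_n\le A_++\varepsilon$ eventually''—in effect a $\limsup$ condition (available in the intended application) rather than the stated hypothesis on $\liminf$. Your approach bypasses this entirely: in the regime $b_n>n^{\varepsilon}$ you get $G_n>n^{\varepsilon}$ outright, so only the lower bound $\liminf a_n\ge A_-$ is ever invoked, confirming your closing remark.
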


\begin{proof}
Fix $n\ge 2$ and set
\[
\ell_n:=\log(n^2)=2\log n,\qquad
u_n:=\log\ell_n=\log(2\log n),\qquad
t_n:=\sin u_n\in[-1,1].
\]
Then, $L(n^2)=e^{t_n u_n}$. Define
\[
H_n\ :=\ -\log\!\Big(\frac{L(n^2)}{n}\Big)\,L(n^2)
= e^{t_n u_n}\big(\log n- t_n u_n\big).
\]
A direct identity gives
\begin{equation}\label{eq:G_decomp_final}
G_n
= a_n\,H_n\ -\ a_n\,L(n^2)\,\log a_n,
\qquad
a_n:=\frac{n\,\mathbb P(D^\rightarrow\ge n)}{L(n^2)}.
\end{equation}

From \eqref{eq:scale_comp}, for every $\varepsilon>0$ there exists $N_\varepsilon$ such that, for $n\ge N_\varepsilon$,
\begin{equation}\label{eq:a-_a+}
a_n \ \le\ A_+ + \varepsilon
\quad\text{and}\quad
|\log a_n|\le C_\varepsilon,
\end{equation}
for some finite $C_\varepsilon$.

For $n\in\mathbb{N}$, define
\[
h_n(t):=e^{t u_n}\big(\log n - t u_n - \log a_n\big),\qquad t\in[-1,1].
\]
Then
\[
h'_n(t)=u_n e^{t u_n}\,\big(\log n - t u_n - 1 - \log a_n\big).
\]
Since $u_n=\log(2\log n)=o(\log n)$ and $|\log a_n|\le C_\varepsilon$, there is $N_\varepsilon'$ such that, for $n\ge N_\varepsilon'$ and $t\in[-1,1]$,
\[
\log n - t u_n - 1 - \log a_n\ \ge\ \log n - u_n - 1 - C_\varepsilon\ >\ 0.
\]
Thus, $t\mapsto h_n(t)$ is strictly increasing on $[-1,1]$ for all $n$ large, and hence
\begin{equation}\label{eq:hn_lb_final}
h_n(t_n)\ \ge\ h_n(-1)=e^{-u_n}\big(\log n + u_n - \log a_n\big).
\end{equation}

Since $G_n=a_n h_n(t_n)$ by \eqref{eq:G_decomp_final}, \eqref{eq:hn_lb_final} yields
\[
G_n\ \ge\ a_n\,e^{-u_n}\big(\log n + u_n - \log a_n\big).
\]
Using $e^{u_n}=2\log n$ and $u_n/\log n\to 0$,
\[
\frac{\log n + u_n}{e^{u_n}}=\frac12+\frac{u_n}{2\log n}=\frac12+o(1).
\]
Moreover, from \eqref{eq:a-_a+}, we obtain
\[
\frac{|a_n\,\log a_n|}{e^{u_n}}\ \le\ \frac{(A_++\varepsilon)\,C_\varepsilon}{2\log n}\ \xrightarrow[n\to\infty]{} 0.
\]
Hence
\[
\liminf_{n\to\infty} G_n\ \ge\ \frac12\,\liminf_{n\to\infty} a_n\ \ge\ \frac{A_-}{2}.
\]

\end{proof}

%%%%%%%%%%%%%%%%%%%%%%%%%%%%%%%%%%%%%%%%%%%%%%%%%%%%%%%%%%%%%%%%%%%
%%                                                               %%
%% Use the two commands below for producing your bibliography    %%
%% with bibtex, then comment again the commands and include the  %%
%% content of the .bbl file in this file below the commands.     %%
%%                                                               %%
%%%%%%%%%%%%%%%%%%%%%%%%%%%%%%%%%%%%%%%%%%%%%%%%%%%%%%%%%%%%%%%%%%%

%\bibliographystyle{amsplain}
%\bibliography{yourbibfilename}

\begin{thebibliography}{99}

\bibitem{Alves2002}
O.~S.~M. Alves, F.~P. Machado and S. Popov (2002).
Phase transition for the frog model.
\textit{Electron. J. Probab.} \textbf{7}, 1--21.

\bibitem{Bertacchi2013}
D. Bertacchi and F. Zucca (2013).
Rumor processes in random environment on 
$\mathbb{N}$ and on Galton–Watson trees.
\textit{J. Stat. Phys.} \textbf{153}, 486–-511.

\bibitem{Bertacchi2014}
D. Bertacchi, F.~P. Machado and F. Zucca (2014).
Local and global survival for nonhomogeneous random walk systems on $\mathbb{Z}$.
\textit{Adv. Appl. Probab.} \textbf{46}, 256--278.




\bibitem{BGT1987}
N.~H. Bingham, C.~M. Goldie and J.~L. Teugels (1987).
Regular Variation.
\textit{Cambridge University Press}.
DOI: \href{https://doi.org/10.1017/CBO9780511721434}{10.1017/CBO9780511721434}.

\bibitem{CarvalhoMachado2025}
G.~O. Carvalho and F.~P. Machado (2025).
Frog model on $\mathbb{Z}$ with random survival parameter. Preprint available at \href{https://doi.org/10.48550/arXiv.2503.09766}{arXiv}.

\bibitem{Fontes2004}
L.~R.~G. Fontes, F.~P. Machado and A. Sarkar (2004).
The critical probability for the frog model is not a monotonic function of the graph.
\textit{J. Appl. Probab.} \textbf{41}, 292--298.

\bibitem{Gallo2023}
S. Gallo and C. Peña (2023).
Critical parameter of the frog model on homogeneous trees with geometric lifetime.
\textit{J. Stat. Phys.} \textbf{190}, Paper No. 34.


\bibitem{Gantert2009}
N. Gantert and P. Schmidt (2009).
Recurrence for the frog model with drift on $\mathbb{Z}$.
\textit{Markov Process. Related Fields} \textbf{15}, 51--58.
MR2509423.

\bibitem{Lebensztayn2005}
E. Lebensztayn, F.~P. Machado and S. Popov (2005).
An improved upper bound for the critical probability of the frog model on homogeneous trees.
\textit{J. Stat. Phys.} \textbf{119}, 331--345.


\bibitem{Lebensztayn2019}
E. Lebensztayn and J. Utria (2019).
A new upper bound for the critical probability of the frog model on homogeneous trees.
\textit{J. Stat. Phys.} \textbf{176}, 169--179.


\bibitem{Lebensztayn2020}
E. Lebensztayn and J. Utria (2020).
Phase transition for the frog model on biregular trees.
\textit{Markov Process. Related Fields} \textbf{26}, 447--466.
MR4237211.


\end{thebibliography}

% add below the content of your .bbl file produced by bibtex.

%%%%%%%%%%%%%%%%%%%%%%%%%%%%%%%%%%%%%%%%%%%%%%%%%%%%%%%%%%%%%%%%%%%
%%                                                               %%
%% You may add acknowledgments (optional).                       %%
%%                                                               %%
%%%%%%%%%%%%%%%%%%%%%%%%%%%%%%%%%%%%%%%%%%%%%%%%%%%%%%%%%%%%%%%%%%%
\begin{acks}
G.O.C. was supported by the Coordenação de Aperfeiçoamento de Pessoal de Nível Superior - Brasil (CAPES) - Finance Code 001. F.P.M. and J.H.R.G. are  supported by FAPESP (grants 2023/13453-5 and 2025/03804-0, respectively).
\end{acks}

%%%%%%%%%%%%%%%%%%%%%%%%%%%%%%%%%%%%%%%%%%%%%%%%%%%%%%%%%%%%%%%%%%%
%%                                                               %%
%% You have reached the end of your document.                    %%
%%                                                               %%
%%%%%%%%%%%%%%%%%%%%%%%%%%%%%%%%%%%%%%%%%%%%%%%%%%%%%%%%%%%%%%%%%%%

\end{document}